\numberwithin{equation}{section}
\newtheorem{theorem}{\bf Theorem}[section]
\newtheorem{lem}{\bf Lemma}[section]
\newtheorem{cor}{\bf Corollary}[section]
\newtheorem{remark}{\bf Remark}[section]
\newcommand\dv{\mathrm{div}}
\newcommand\vol{\mathrm{vol}}
\newcommand\tr{\mathrm{tr}}
\newcommand\rc{\mathrm{Ric}}
\begin{document}

\title[Eigenvalue estimates for elliptic operators in divergence form]{Eigenvalue estimates for a class of elliptic differential operators in divergence form on Riemannian manifolds isometrically immersed in Euclidean space}
\author[M.C. Araújo Filho]{Marcio C. Araújo Filho$^1$}
\author[J.N.V. Gomes]{José N.V. Gomes$^2$}
\address{$^1$Departamento de Matemática, Universidade Federal de Rondônia, Campus Ji-Paraná, R. Rio Amazonas, 351, Jardim dos Migrantes, 76900-726 Ji-Paraná, Rondônia, Brazil}
\address{$^2$Departamento de Matemática, Universidade Federal de São Carlos, Rod. Washington Luíz, Km 235, 13565-905 São Carlos, São Paulo, Brazil}
\email{$^1$marcio.araujo@unir.br}
\email{$^2$jnvgomes@ufscar.br}
\keywords{Eigenvalue estimates, Elliptic operator, Riemannian Manifold, Gaussian soliton.}
\subjclass[2010]{Primary 47A75; Secondary 47F05, 35P15, 53C25}

\begin{abstract}
We obtain eigenvalue estimates for a larger class of elliptic differential operators in divergence form on a bounded domain in a complete Riemannian manifold isometrically immersed in Euclidean space. As an application, we give eigenvalue estimates in the Gaussian shrinking soliton, and we find a domain that makes the behavior of these estimates similar to the estimates for the case of the Laplacian. Moreover, we also give an answer to the generalized conjecture of Pólya.
\end{abstract}
\maketitle

\section{Introduction}\label{Introduction}
Let $(M^n,\langle,\rangle)$ be an $n$-dimensional complete Riemannian manifold, and $\Omega~\subset~M^n$ be a bounded domain with smooth boundary $\partial\Omega$. Let us consider a function $\eta\in C^2(M)$ and a symmetric positive definite $(1,1)$-tensor $T$ on $M^n$. Since $\Omega$ is a bounded domain, there exist two positive real constants $\varepsilon$ and $\delta$, such that $\varepsilon \leq\langle T(X), X \rangle\leq\delta$, for any unit vector field $X$ on $\Omega$.

In this paper, we compute eigenvalue estimates for a larger class of elliptic differential operators in divergence form that includes, e.g., the Laplace-Beltrami and Cheng-Yau operators, to name a few. We consider the eigenvalue problem with the Dirichlet boundary condition:
\begin{equation}\label{problem1}
    \left\{\begin{array}{ccccc}
    \mathscr{L}  u  &=& - \lambda u & \mbox{in } & \Omega,\\
     u&=&0 & \mbox{on} & \partial\Omega,
    \end{array}
    \right.
\end{equation}
where $\mathscr{L}$ is defined as the second-order elliptic differential operator in the $(\eta,T)$-divergence form
\begin{equation}\label{eq1.1}
    \mathscr{L}f =\dv_\eta (T(\nabla f)) = \dv(T(\nabla f)) - \langle \nabla \eta, T(\nabla f) \rangle.
\end{equation}
Here $\dv$ stands for the divergence of smooth vector fields and $\nabla$ for the gradient of smooth functions.

We observe that $\mathscr{L}$ is a formally self-adjoint operator in the Hilbert space $\mathcal{H}_0^1(\Omega,e^{-\eta}d\Omega)$ of all functions in $L^2(\Omega,e^{-\eta}d\Omega)$ that vanish on $\partial \Omega$ in the sense of the trace, see Section~\ref{preliminaries}. Thus, Problem~\eqref{problem1} has a real and discrete spectrum
\begin{equation}\label{spectrum}
    0 < \lambda_1 \leq \lambda_2 \leq \cdots \leq \lambda_k \leq \cdots\to\infty,
\end{equation}
where each $\lambda_i$ is repeated according to its multiplicity. Eigenspaces belonging to distinct eigenvalues are orthogonal in $L^2(\Omega,e^{-\eta}d\Omega)$, which is the direct sum of all the eigenspaces. We refer to the dimension of each eigenspace as the multiplicity of the eigenvalue, for more details see Chavel~\cite{chavel}. 

For the case of $T$ be the identity operator, and $\eta$ be a constant function, Problem~\eqref{problem1} becomes
\begin{equation}\label{problem-3.4}
    \left\{\begin{array}{ccccc}
    \Delta  u  &=& - \lambda u & \mbox{in } & \Omega,\\
     u&=&0 & \mbox{on} & \partial\Omega,
    \end{array}
    \right.
\end{equation}
 where $\Delta$ is the Laplace-Beltrami operator. Some interesting eigenvalue estimates for Problem~\eqref{problem-3.4} had been obtained, for instance, see Chen and Cheng~\cite{ChenCheng08}, Cheng and Yang~\cite{ChengYang07,ChengYang05,ChengYang06} and Yang~\cite{Yang}, some of them will be discussed in due course.
 
For the case of $T$ be the identity operator and $\eta$ be not necessarily a constant function, Problem~\eqref{problem1} becomes
\begin{equation}\label{problem-3.5}
    \left\{\begin{array}{ccccc}
    \Delta_\eta  u  &=& - \lambda u & \mbox{in } & \Omega,\\
     u&=&0 & \mbox{on} & \partial\Omega,
    \end{array}
    \right.
\end{equation}
where $\Delta_\eta$ is the drifted Laplace-Beltrami operator. In this case, it is worth mentioning the work by the second author and Miranda~\cite{GomesMiranda}, and the paper by Xia and Xu~\cite{XiaXu} in which we can find some estimates of the eigenvalues of Problem~\eqref{problem-3.5}. Later, we will undertake a more detailed discussion of eigenvalues estimates of Problem~\eqref{problem-3.5} proved in these papers. 

Problem~\eqref{problem1} is a partial differential equation (PDE) with the Dirichlet boundary condition. It is well-known that PDE's play a fundamental role not only from a mathematical point of view but also in the description and modeling of many physical and probabilistic phenomena. Such equations appear, for example, in Laplace's equations, Helmholtz's equation, linear transport equation, Liouville's equation, Kolmogorov's equation, Schrödinger's equation, and in a differential geometry context. An interesting example appears in the equation of minimal surfaces, see, e.g., Evans~\cite{Evans} or Grebenkov and Naguyen~\cite{GrebenkorNaguyen}. In particular, Schrödinger's equation is a central equation in quantum mechanics. For instance, the eigenvalues of Schrödinger's equation corresponding to the allowed energy levels of the quantum system, and the gap between the eigenvalues is the gap between the energy levels. These eigenvalues are related  to the Hamiltonian operator that appears in Schrödinger's equation. Indeed, this equation is an eigenvalue problem for the Hamiltonian operator where the eigenvalues are the (allowed) total energies.

In what follows, the smooth vector field ${\bf H}_T$ on $M^n$ stands for the generalized mean curvature vector  associated with the $(1,1)$-tensor $T$, see Section~\ref{preliminaries} for details. Our first result is a quadratic estimate for the eigenvalues of Problem~\eqref{problem1}, which is an essential tool to obtain some of our estimates.

\begin{theorem}\label{theorem1.1}
Let $\Omega$ be a bounded domain in an $n$-dimensional complete Riemannian manifold $M^n$ isometrically immersed in $\mathbb{R}^m$, and $\lambda_i$ be the $i$-th eigenvalue of Problem~\eqref{problem1}. Then, we have
\begin{equation*}
    \sum_{i=1}^k(\lambda_{k+1}-\lambda_i)^2 \leq \frac{4\delta}{n\varepsilon}\sum_{i=1}^k(\lambda_{k+1}-\lambda_i)\Big[\Big(\sqrt{\lambda_i}  + \frac{T_0}{2\sqrt{\delta}}\Big)^2 + \frac{n^2H_0^2+4C_0+2\delta T_0\eta_0}{4\delta} \Big],
\end{equation*}
where
\begin{equation}\label{C_0}
    C_0=\sup_\Omega \Big\{\frac{1}{2}\dv (T^2(\nabla \eta)) - \frac{1}{4}|T(\nabla \eta)|^2\Big\},
\end{equation}
$T_0=\sup_{\Omega}|\tr(\nabla T)|$, $\eta_0=\sup_{\Omega}|\nabla \eta|$ and $H_0=\sup_\Omega |{\bf H}_T|$.
\end{theorem}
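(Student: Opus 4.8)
The plan is to adapt the classical Payne–Pólya–Weinberger / Yang-type argument to the weighted operator $\mathscr{L}$, using the coordinate functions $x_1,\dots,x_m$ of the isometric immersion into $\mathbb{R}^m$ as trial functions. Let $u_i$ be an $L^2(\Omega,e^{-\eta}d\Omega)$-orthonormal basis of eigenfunctions with $\mathscr{L}u_i = -\lambda_i u_i$. For each coordinate $\alpha$ and each $i\le k$, I would set
\[
\varphi_{\alpha i} = x_\alpha u_i - \sum_{j=1}^k a_{\alpha ij}u_j,\qquad a_{\alpha ij}=\int_\Omega x_\alpha u_i u_j\, e^{-\eta}d\Omega,
\]
so that $\varphi_{\alpha i}$ is orthogonal to $u_1,\dots,u_k$ and vanishes on $\partial\Omega$; hence it is a legitimate test function for $\lambda_{k+1}$ via the Rayleigh quotient. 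The Rayleigh inequality $\lambda_{k+1}\int\varphi_{\alpha i}^2 e^{-\eta} \le -\int \varphi_{\alpha i}\mathscr{L}\varphi_{\alpha i}\, e^{-\eta}$, summed over $\alpha$ and $i$ and combined with the commutator identity for $\mathscr{L}(x_\alpha u_i)$, will produce the stated recursion after a Cauchy–Schwarz step.

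The technical heart is the commutator computation. I would first establish, from \eqref{eq1.1}, the product-type formula
\[
\mathscr{L}(x_\alpha u_i) = x_\alpha \mathscr{L}u_i + 2\langle T(\nabla x_\alpha),\nabla u_i\rangle + u_i\,\mathscr{L}x_\alpha,
\]
and then evaluate $\sum_\alpha |\nabla x_\alpha|_T^2$-type quantities using the immersion. The key geometric inputs are: $\sum_\alpha \langle T(\nabla x_\alpha),\nabla x_\alpha\rangle = \operatorname{tr}T$ (so this is bounded above by $n\delta$ and below by $n\varepsilon$), $\sum_\alpha (\mathscr{L}x_\alpha)^2$ controls $n^2|\mathbf H_T|^2$ plus lower-order terms involving $T_0=\sup|\tr(\nabla T)|$ (since $\mathscr{L}x_\alpha = \langle \mathbf H_T, \nu_\alpha\rangle$-type expression plus $\langle (\operatorname{div}T - T(\nabla\eta)), \nabla x_\alpha\rangle$ contributions), and the cross terms $\sum_\alpha x_\alpha$-weighted pieces that, after integrating by parts against $e^{-\eta}d\Omega$, generate exactly the constant $C_0$ from \eqref{C_0} together with the $\delta T_0\eta_0$ term. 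The normalization $\sum_\alpha \int (\mathscr{L}x_\alpha)u_i u_j e^{-\eta}$ and $\sum_\alpha \int \langle T(\nabla x_\alpha),\nabla u_i\rangle u_j e^{-\eta}$ must be bookkept carefully, since it is their interplay (via the antisymmetry/symmetry in $i,j$) that collapses the double sums into the single sum over $i$ with the factor $\tfrac{4\delta}{n\varepsilon}$.

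Concretely the steps are: (1) record the self-adjointness and the Rayleigh characterization of $\lambda_{k+1}$ in $\mathcal H^1_0(\Omega,e^{-\eta}d\Omega)$; (2) prove the commutator/product formula for $\mathscr{L}(x_\alpha u_i)$; (3) compute $\int \varphi_{\alpha i}^2 e^{-\eta}$ and $-\int\varphi_{\alpha i}\mathscr{L}\varphi_{\alpha i}e^{-\eta}$ in terms of $\lambda_i$, the coefficients $a_{\alpha ij}$, and auxiliary quantities $b_{\alpha ij}=\int (2\langle T(\nabla x_\alpha),\nabla u_i\rangle + u_i\mathscr{L}x_\alpha)u_j e^{-\eta}$; (4) sum over $\alpha$, use $\sum_\alpha a_{\alpha ij}b_{\alpha ij}$-antisymmetry to discard the basis-dependent terms, and invoke the geometric identities above to bound the surviving $\sum_\alpha$ quantities by $n\delta$, $n\varepsilon$, $n^2H_0^2$, $T_0$, $\eta_0$, $C_0$; (5) multiply the $i$-th inequality by $(\lambda_{k+1}-\lambda_i)$, which is $\ge 0$, sum over $i\le k$, and apply Cauchy–Schwarz to the term containing $\langle T(\nabla x_\alpha),\nabla u_i\rangle$ to pull out $\sum_i(\lambda_{k+1}-\lambda_i)^2$ on the left. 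The main obstacle I anticipate is step (4): isolating the precise lower-order contributions so that they assemble into exactly $C_0$ and $\delta T_0\eta_0$ rather than some cruder bound — this requires integrating $\operatorname{div}(T^2(\nabla\eta))$ by parts against the weight $e^{-\eta}$ and carefully completing the square to obtain the clean form $\big(\sqrt{\lambda_i}+\tfrac{T_0}{2\sqrt\delta}\big)^2$.
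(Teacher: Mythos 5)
Your proposal follows essentially the same route as the paper's proof: coordinate functions of the isometric immersion as test functions in the Yang--Cheng--Yang commutator argument, the product rule for $\mathscr{L}$, the identities $\sum_\ell T(\nabla x_\ell,\nabla x_\ell)=\tr(T)$ and $\sum_\ell(\mathscr{L}x_\ell)^2=n^2|\mathbf{H}_T|^2+|\tr(\nabla T)-T(\nabla\eta)|^2$, the weighted integration by parts that produces the constant $C_0$, the bound $\|T(\nabla u_i)\|_{L^2}^2\le\delta\lambda_i$, and the final completion of the square. The only difference is organizational: the paper quotes the Yang-type inequality for a general test function from Gomes and Miranda (Lemma~\ref{lemma1}) instead of re-deriving it with the trial functions $x_\alpha u_i-\sum_j a_{\alpha ij}u_j$; if you do re-derive it, carry the Cheng--Yang weights $(\lambda_{k+1}-\lambda_i)^2$ through the Cauchy--Schwarz step, since multiplying only by a single factor $(\lambda_{k+1}-\lambda_i)$ as sketched in your step (5) yields a weaker Hile--Protter-type bound rather than the stated quadratic inequality.
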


We also prove an estimate for the sum of lower order eigenvalues in terms of the first eigenvalue of Problem~\eqref{problem1}.

\begin{theorem}\label{theorem1.2}
Let $\Omega$ be a bounded domain in an $n$-dimensional complete Riemannian manifold $M^n$ isometrically immersed in $\mathbb{R}^m$, and denote by $\lambda_i$ the $i$-th eigenvalue of Problem~\eqref{problem1}, for $i=1,\ldots,n$. Then, we have
\begin{equation*}
    \sum_{i=1}^n(\lambda_{i+1} - \lambda_1) \leq \frac{4\delta}{\varepsilon}\Big[\Big(\sqrt{\lambda_1} +\frac{T_0}{2\sqrt{\delta}}\Big)^2 + \frac{n^2H_0^2+4C_0+2\delta T_0\eta_0}{4\delta}\Big],
\end{equation*}
 where the constants  $T_0, H_0, C_0$ and $\eta_0$ are as in Theorem~\ref{theorem1.1}.
\end{theorem}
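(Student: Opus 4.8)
The plan is to build $n$ trial functions from the first eigenfunction and the coordinate functions of the immersion, apply the variational characterization of $\lambda_2,\dots,\lambda_{n+1}$, and then sum the resulting inequalities.

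\textbf{Trial functions.} Fix an eigenfunction $u_1$ of \eqref{problem1} for $\lambda_1$ with $\int_\Omega u_1^2e^{-\eta}d\Omega=1$, and write the immersion as $\varphi=(x_1,\dots,x_m)\colon M^n\to\mathbb R^m$. Composing $\varphi$ with an isometry $y\mapsto U(y-a)$ of $\mathbb R^m$ ($a\in\mathbb R^m$, $U\in O(m)$) leaves invariant the immersion identities $\sum_\alpha\langle\nabla x_\alpha,\nabla f\rangle\langle\nabla x_\alpha,\nabla g\rangle=\langle\nabla f,\nabla g\rangle$ — so $\sum_\alpha|\nabla x_\alpha|^2=n$, $\sum_\alpha\langle T(\nabla x_\alpha),\nabla x_\alpha\rangle=\tr T$, $\sum_\alpha\langle T(\nabla x_\alpha),\nabla u_1\rangle^2=|T(\nabla u_1)|^2$ — as well as the pointwise control of $\sum_\alpha(\mathscr{L}x_\alpha)^2$ by $\mathbf{H}_T$, $\tr(\nabla T)$ and $T(\nabla\eta)$ coming from Section~\ref{preliminaries} that underlies Theorem~\ref{theorem1.1}. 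Choosing $a$ so that $\int_\Omega x_\alpha u_1^2e^{-\eta}d\Omega=0$ for all $\alpha$, and then $U$ from a $QR$-factorization of the $m\times n$ matrix of inner products $\int_\Omega x_\alpha u_1u_{j+1}e^{-\eta}d\Omega$ ($1\le\alpha\le m$, $1\le j\le n$; possible since $m\ge n$), we may assume in addition that this matrix is upper triangular. Then $\phi_\alpha:=x_\alpha u_1$, $\alpha=1,\dots,n$, lie in $\mathcal{H}_0^1(\Omega,e^{-\eta}d\Omega)$ and $\phi_\alpha\perp u_1,\dots,u_\alpha$ in $L^2(\Omega,e^{-\eta}d\Omega)$. (One may assume $\varphi(\Omega)$ lies in no affine hyperplane, so each $R_\alpha$ below is positive.)

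\textbf{Basic inequality.} Using $\mathscr{L}(x_\alpha u_1)=-\lambda_1x_\alpha u_1+u_1\mathscr{L}x_\alpha+2\langle T(\nabla x_\alpha),\nabla u_1\rangle$, integration by parts (boundary terms vanish because $u_1|_{\partial\Omega}=0$), and the identity $\int_\Omega\langle T(\nabla x_\alpha),\nabla(x_\alpha u_1^2)\rangle e^{-\eta}d\Omega=-\int_\Omega x_\alpha u_1^2\mathscr{L}x_\alpha\,e^{-\eta}d\Omega$ (which makes the cross term cancel), one gets, with $R_\alpha:=\int_\Omega u_1^2\langle T(\nabla x_\alpha),\nabla x_\alpha\rangle e^{-\eta}d\Omega$ and $g_\alpha:=u_1\mathscr{L}x_\alpha+2\langle T(\nabla x_\alpha),\nabla u_1\rangle$,
\[
\int_\Omega\langle T(\nabla\phi_\alpha),\nabla\phi_\alpha\rangle e^{-\eta}d\Omega=\lambda_1\int_\Omega\phi_\alpha^2e^{-\eta}d\Omega+R_\alpha,\qquad R_\alpha=-\int_\Omega\phi_\alpha g_\alpha\,e^{-\eta}d\Omega .
\]
Since $\phi_\alpha\perp u_1,\dots,u_\alpha$, the Rayleigh–Ritz inequality gives $\lambda_{\alpha+1}\int_\Omega\phi_\alpha^2e^{-\eta}d\Omega\le\int_\Omega\langle T(\nabla\phi_\alpha),\nabla\phi_\alpha\rangle e^{-\eta}d\Omega$, hence $(\lambda_{\alpha+1}-\lambda_1)\int_\Omega\phi_\alpha^2e^{-\eta}d\Omega\le R_\alpha$.

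\textbf{Lower bound on $\|\phi_\alpha\|$ and summation.} Cauchy–Schwarz in $R_\alpha=-\int_\Omega\phi_\alpha g_\alpha e^{-\eta}d\Omega$ gives $\int_\Omega\phi_\alpha^2e^{-\eta}d\Omega\ge R_\alpha^2\big/\int_\Omega g_\alpha^2e^{-\eta}d\Omega$, so together with the basic inequality
\[
\lambda_{\alpha+1}-\lambda_1\le\frac{1}{R_\alpha}\int_\Omega g_\alpha^2\,e^{-\eta}d\Omega,\qquad \alpha=1,\dots,n .
\]
Now sum over $\alpha$. Writing $g_\alpha^2=u_1^2(\mathscr{L}x_\alpha)^2+2\mathscr{L}x_\alpha\langle T(\nabla x_\alpha),\nabla(u_1^2)\rangle+4\langle T(\nabla x_\alpha),\nabla u_1\rangle^2$, integrating the middle term by parts, and using the immersion identities above, the bounds $\varepsilon\le\langle T(X),X\rangle\le\delta$ and $\langle T(\nabla x_\alpha),\nabla u_1\rangle^2\le\langle T(\nabla x_\alpha),\nabla x_\alpha\rangle\langle T(\nabla u_1),\nabla u_1\rangle$, the definitions of $T_0,\eta_0,H_0$ and of $C_0$ in \eqref{C_0}, and $\int_\Omega\langle T(\nabla u_1),\nabla u_1\rangle e^{-\eta}d\Omega=\lambda_1$, the right-hand side collapses — after one more Cauchy–Schwarz and the completion of a square in $\sqrt{\lambda_1}$ — to $\tfrac1\varepsilon\big[(2\sqrt{\delta\lambda_1}+T_0)^2+n^2H_0^2+4C_0+2\delta T_0\eta_0\big]$, i.e. exactly $\tfrac{4\delta}{\varepsilon}\big[(\sqrt{\lambda_1}+\tfrac{T_0}{2\sqrt\delta})^2+\tfrac{n^2H_0^2+4C_0+2\delta T_0\eta_0}{4\delta}\big]$, which is the asserted bound.

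\textbf{Main obstacle.} The subtle point is the final collapse: the denominators $R_\alpha$ are genuinely non-constant (they only satisfy $\sum_\alpha R_\alpha=\int_\Omega u_1^2\tr T\,e^{-\eta}d\Omega\le n\delta$), so $\sum_\alpha R_\alpha^{-1}\int_\Omega g_\alpha^2e^{-\eta}d\Omega$ can be tamed only because both constituents of $g_\alpha$ — the drift–mean-curvature term $u_1\mathscr{L}x_\alpha$ (handled via the integration by parts of the cross term, which is where $C_0$ and $\tr(\nabla T)$ surface) and the energy term $\langle T(\nabla x_\alpha),\nabla u_1\rangle$ — are ultimately controlled by $u_1^2\langle T(\nabla x_\alpha),\nabla x_\alpha\rangle$; arranging the pieces so that they recombine into the precise squared expression $\big(\sqrt{\lambda_1}+T_0/(2\sqrt\delta)\big)^2$ plus the curvature/drift remainder is the computational heart of the argument. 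Everything else is the Rayleigh–Ritz / integration-by-parts bookkeeping already set up for Theorem~\ref{theorem1.1}.
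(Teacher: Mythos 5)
Your setup (centered and rotated coordinate functions times $u_1$ as trial functions, Rayleigh--Ritz, the integration by parts producing $(\lambda_{\alpha+1}-\lambda_1)\int_\Omega\phi_\alpha^2\,dm\le R_\alpha$ with $R_\alpha=\int_\Omega u_1^2\,T(\nabla y_\alpha,\nabla y_\alpha)\,dm$) is exactly the paper's, but the final step has a genuine gap. You divide by $R_\alpha$, obtain $\lambda_{\alpha+1}-\lambda_1\le R_\alpha^{-1}\int_\Omega g_\alpha^2\,dm$, and then sum only over $\alpha=1,\dots,n$, asserting that the right-hand side ``collapses'' to $\frac{4\delta}{\varepsilon}\bigl[(\sqrt{\lambda_1}+\frac{T_0}{2\sqrt\delta})^2+\frac{n^2H_0^2+4C_0+2\delta T_0\eta_0}{4\delta}\bigr]$. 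This cannot be carried out as described, for two reasons. First, the immersion identities you invoke, $\sum_\alpha\langle T(\nabla y_\alpha),\nabla u_1\rangle^2=|T(\nabla u_1)|^2$ and the bound on $\sum_\alpha(\mathscr{L}y_\alpha)^2$ by $n^2|{\bf H}_T|^2+|\tr(\nabla T)-T(\nabla\eta)|^2$, are pointwise identities only when the sum runs over all $m$ coordinates; for the truncated sum $\alpha\le n$ there is no such control of $\int_\Omega g_\alpha^2\,dm$ term by term. Second, even granting control of $\sum_\alpha\int_\Omega g_\alpha^2\,dm$, the weights $R_\alpha^{-1}$ are not uniformly bounded: only $\sum_{\alpha=1}^m R_\alpha\le n\delta$ is available, and an individual $R_\alpha$ with $\alpha\le n$ can be arbitrarily small (the rotation is dictated by the Gram--Schmidt of the matrix $\int_\Omega x_\beta u_1u_{j+1}dm$, not by the geometry, so some $y_\alpha$, $\alpha\le n$, may be nearly constant on $\Omega$, e.g.\ close to a normal direction), while $\int_\Omega g_\alpha^2\,dm$ contains $u_1\mathscr{L}y_\alpha$, which is governed by second-fundamental-form data and need not be small when $|\nabla y_\alpha|$ is. So the quotient $R_\alpha^{-1}\int_\Omega g_\alpha^2\,dm$ can blow up, and the proposed summation does not yield the stated bound.

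The paper avoids this precisely by never dividing: it keeps the product inequality $(\lambda_{i+1}-\lambda_1)\int_\Omega u_1^2\,T(\nabla y_i,\nabla y_i)\,dm\le 4\bigl\|\tfrac12u_1\mathscr{L}y_i+T(\nabla y_i,\nabla u_1)\bigr\|_{L^2}^2$, sums it over \emph{all} $i=1,\dots,m$ (which requires performing the Gram--Schmidt on the full $m\times m$ matrix so that each $w_i$ is orthogonal to $u_1,\dots,u_i$), bounds the full right-hand sum by $4\delta\bigl[(\sqrt{\lambda_1}+\frac{T_0}{2\sqrt\delta})^2+\frac{n^2H_0^2+4C_0+2\delta T_0\eta_0}{4\delta}\bigr]$ exactly as in Theorem~\ref{theorem1.1}, and bounds the left-hand sum from below by $\varepsilon\sum_{i=1}^n(\lambda_{i+1}-\lambda_1)$ using $\varepsilon|\nabla y_i|^2\le T(\nabla y_i,\nabla y_i)$, $\sum_{i=1}^m|\nabla y_i|^2=n$, $|\nabla y_i|\le 1$, and the monotonicity $\lambda_{i+1}\ge\lambda_{n+1}$ for $i\ge n+1$ (the ``tail'' trick). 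Replacing your division-and-truncation step by this summation over all $m$ coordinates with the tail estimate is what is needed to close the argument.
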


It is important to observe that there exists a beautiful method of deducing inequalities similar to Hile and Protter~\cite{HileProtter} and Payne et al.~\cite{Payne-etal}. Moreover, it also recover Inequality~(4) in Yang~\cite{Yang} which is sharper than corresponding inequalities in\cite{HileProtter} and \cite{Payne-etal}, in the Weyl's asymptotic sense, see comments in \cite[p. 2-3]{Yang}. Such a method is abstract and can be applied to a given self-adjoint operator $\mathscr{L}$ acting in a Hilbert space using purely functional-analytic techniques, see Harrell II and Stubbe~\cite{HarrellStubbe} and Levitin and Parnovski~\cite{LevitinParnovski} for details. This abstract method consists in computing the commutators of $\mathscr{L}$ with some auxiliary operator $G$: the bounds are then expressed in terms of action of the commutators on eigenfunctions of $\mathscr{L}$. In each particular case, a suitable choice of $G$ is required (for a second order operator the coordinate functions usually serve well),
and then some specific tricks involving integration by parts and Cauchy–Schwartz inequality. Unfortunately, we were not yet able to use this method to get the quadratic inequality in Theorem~\ref{theorem1.1}. For a general case of Weyl's asymptotic formula, see e.g. Fonseca and Gomes~\cite[Expressions~(7.8) and (7.9)]{FonsecaGomes}.


\begin{remark}
Theorems~\ref{theorem1.1} and \ref{theorem1.2} generalize two results by Chen and Cheng \cite[Theorem~1.1]{ChenCheng08} and ~\cite[Theorem~1.2]{ChenCheng08}, respectively. This will become more evident by means of Corollaries~\ref{cor-Tparalell} and \ref{lower-order-estimate}.
\end{remark}

A special case that will be addressed in this paper refers to the {\it divergence-free} tensors, i.e., $\dv{T} = 0$. In this setting, Theorems~\ref{theorem1.1} and~\ref{theorem1.2} have a simpler configuration, see Corollaries~\ref{cor-Tparalell} and~\ref{lower-order-estimate}, respectively. Divergence-free tensors often arise from physical facts, and we can find some of them in fluid dynamics such as compressible gas, rarefied gas, steady/self-similar flows, and relativistic gas dynamics, for more details see Serre~\cite{Serre}.

When $T$ is divergence-free there exists a relationship between the operator $\mathscr{L}$ and the operator which was introduced by Cheng and Yau~\cite{ChengYau} as follows 
\begin{equation*}
    \square f = \tr{(\nabla^2f \circ T)}=\langle \nabla^2 f, T\rangle,
\end{equation*}
where $f\in C^\infty(M)$. Indeed, from \cite[Eq.~(2.3)]{GomesMiranda} the operator $\mathscr{L}$ becomes
\begin{equation}\label{drifted-CY-operator}
    \mathscr{L}f = \square f - \langle\nabla \eta, T(\nabla f)\rangle.
\end{equation}
Thus, $\mathscr{L}$ is a first-order perturbation of the  Cheng-Yau operator $\square$.  We call Eq.~\eqref{drifted-CY-operator} a \emph{drifted Cheng-Yau operator} with a \emph{drifting function} $\eta$. In particular, if $\eta$ is constant, then $\square$ is a Cheng-Yau operator with $\dv T=0.$

Cheng-Yau operator is appropriate for studying complete hypersurfaces of constant scalar curvature in space forms. In fact, with a careful study of this operator, Cheng and Yau obtained interesting rigidity results for such hypersurfaces. For instance, in the case of Euclidean space ambient, they proved that the only complete and non-compact hypersurfaces with non-negative constant normalized scalar curvature and non-negative sectional curvature are the generalized cylinders, see~\cite{ChengYau} for details. 

One special case of divergence-free symmetric tensor on an $n$-dimensional Riemannian manifold $(M^n, \langle , \rangle)$ is the Einstein tensor $E = \rc - \frac{R}{2}\langle , \rangle$, where $\rc$ is the Ricci tensor of the metric $\langle , \rangle$ and $R=\tr(\rc)$, we can see this fact from contracted Bianchi identity. Another case is when $(M^n, \langle , \rangle)$, $n\geq 3$, is an Einstein manifold, that is, $\rc=\frac{R}{n} \langle , \rangle$, then, $\rc$ is a tensor symmetric positive definite (if $R>0$) which is divergence-free, since $R$ is constant by Schur's lemma and $\dv Ric=\frac{dR}{n}$. The tensors $T = - \rc$ and $-S$ on the hyperbolic space $(\mathbb{H}^n(-1), \langle , \rangle)$, where $S=\frac{1}{n-2}\Big(\rc - \frac{R}{2(n-1)}\langle , \rangle\Big)$ is the Schouten tensor of $\langle , \rangle$, for $n\geq 3$, are two divergence-free positive definite symmetric tensors.

If $T$ is divergence-free, then $T_0=\sup_{\Omega}|\tr(\nabla T)|=0$ (see Section~\ref{preliminaries}), so that from Theorem~\ref{theorem1.1} we obtain immediately the following eigenvalue estimate for the drifted Cheng-Yau operator.
\begin{cor}\label{cor-Tparalell}
Let $\Omega$ be a bounded domain in an $n$-dimensional complete Riemannian manifold $M^n$ isometrically immersed in $\mathbb{R}^m$, and $\lambda_i$ be the $i$-th eigenvalue of Problem~\eqref{problem1} for the drifted Cheng-Yau operator. Then, we have
\begin{equation*}
    \sum_{i=1}^k(\lambda_{k+1}-\lambda_i)^2 \leq \frac{4\delta}{n\varepsilon}\sum_{i=1}^k(\lambda_{k+1}-\lambda_i)\left(\lambda_i+\frac{n^2H_0^2+4C_0}{4\delta}\right),
\end{equation*}
where $C_0$ is given by \eqref{C_0} and $H_0=\sup_\Omega |{\bf H}_T|$.
\end{cor}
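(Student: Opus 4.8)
The plan is to obtain the corollary as a direct specialization of Theorem~\ref{theorem1.1} to the divergence-free case. The one substantive point is that the hypothesis $\dv T = 0$ forces the constant $T_0$ appearing in Theorem~\ref{theorem1.1} to vanish. Indeed, with respect to a local orthonormal frame $\{e_j\}$ one has $\tr(\nabla T)=\sum_j(\nabla_{e_j}T)(e_j)$, and — in the convention fixed in Section~\ref{preliminaries} — this vector field is exactly $\dv T$; hence $\dv T=0$ gives $\tr(\nabla T)\equiv 0$ and therefore $T_0=\sup_\Omega|\tr(\nabla T)|=0$. This is the remark recorded just before the statement, and I would recall it explicitly at the start of the proof.

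With $T_0=0$, I would substitute into the inequality of Theorem~\ref{theorem1.1}. Inside the square bracket the first summand collapses, $\big(\sqrt{\lambda_i}+\tfrac{T_0}{2\sqrt{\delta}}\big)^2=\lambda_i$, and the second becomes $\tfrac{n^2H_0^2+4C_0+2\delta T_0\eta_0}{4\delta}=\tfrac{n^2H_0^2+4C_0}{4\delta}$, the cross term $2\delta T_0\eta_0$ dropping out. The constant $C_0$ of~\eqref{C_0} and $H_0=\sup_\Omega|{\bf H}_T|$ are unchanged, and note that no constraint on $\eta$ is imposed, so the estimate holds for the full drifted Cheng-Yau operator~\eqref{drifted-CY-operator}. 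Carrying these simplifications through the sum on the right-hand side yields
\[
\sum_{i=1}^k(\lambda_{k+1}-\lambda_i)^2 \leq \frac{4\delta}{n\varepsilon}\sum_{i=1}^k(\lambda_{k+1}-\lambda_i)\Big(\lambda_i+\frac{n^2H_0^2+4C_0}{4\delta}\Big),
\]
which is the assertion.

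I do not expect any obstacle: granting Theorem~\ref{theorem1.1}, the corollary is a one-line substitution, and the only thing worth making precise is the identification $\tr(\nabla T)=\dv T$ that makes the divergence-free hypothesis remove precisely the $T_0$-dependent terms and nothing else. If full self-containedness is desired, I would add the short frame computation verifying this identity; otherwise a pointer to Section~\ref{preliminaries} suffices.
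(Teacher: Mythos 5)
Your proposal is correct and coincides with the paper's own argument: the paper likewise observes (via the frame computation in Section~\ref{preliminaries}, using the symmetry of $\nabla_X T$) that $\dv T=0$ forces $\tr(\nabla T)=0$, hence $T_0=0$, and then reads the corollary off from Theorem~\ref{theorem1.1} exactly as you do.
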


For applications of Corollary~\ref{cor-Tparalell}, we set
\begin{equation}\label{sigma-i}
    \varsigma_i=\lambda_i+\frac{n^2H_0^2+4C_0}{4\delta}.
\end{equation}

With this new notation, inequality in Corollary~\ref{cor-Tparalell} is equivalent to
\begin{equation}\label{T-parallel}
    \sum_{i=1}^k(\varsigma_{k+1}-\varsigma_i)^2 \leq \frac{4\delta}{n\varepsilon}\sum_{i=1}^k(\varsigma_{k+1}-\varsigma_i)\varsigma_i.
\end{equation}
Moreover, from \eqref{spectrum} we have $0<\varsigma_1 \leq \varsigma_2 \leq \cdots \leq \varsigma_{k}\leq \cdots\to\infty$. So, we can apply the recursion formula of Cheng and Yang~\cite{ChengYang07}, from which we immediately obtain the next result.

\begin{cor}
Under the same setup as in Corollary~\ref{cor-Tparalell}, we have
\begin{equation*}
    \varsigma_{k+1} \leq \Big(1+\frac{4\delta}{n\varepsilon}\Big)k^{\frac{2\delta}{n\varepsilon}}\varsigma_1.
\end{equation*}
\end{cor}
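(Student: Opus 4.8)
The plan is to observe that \eqref{T-parallel} places the sequence $(\varsigma_i)$ in exactly the situation governed by the recursion formula of Cheng and Yang~\cite{ChengYang07}, and then to run that formula with the dimensional constant $\tfrac{4}{n}$ appearing there replaced throughout by $C_1:=\tfrac{4\delta}{n\varepsilon}$.

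First I would check the hypotheses. By \eqref{spectrum} together with the definition \eqref{sigma-i}, the numbers $\varsigma_i=\lambda_i+\tfrac{n^2H_0^2+4C_0}{4\delta}$ satisfy $0<\varsigma_1\le\varsigma_2\le\cdots\le\varsigma_k\le\cdots\to\infty$; in particular $\varsigma_{k+1}-\varsigma_i\ge 0$ for $1\le i\le k$. By Corollary~\ref{cor-Tparalell}, rewritten as \eqref{T-parallel}, for every $k\ge 1$ one has
\[
    \sum_{i=1}^k(\varsigma_{k+1}-\varsigma_i)^2 \le C_1\sum_{i=1}^k(\varsigma_{k+1}-\varsigma_i)\varsigma_i .
\]
Writing $G_k=\sum_{i=1}^k\varsigma_i$ and $H_k=\sum_{i=1}^k\varsigma_i^2$ and expanding the squares, this becomes the quadratic inequality $k\varsigma_{k+1}^2-(2+C_1)G_k\varsigma_{k+1}+(1+C_1)H_k\le 0$. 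Solving for $\varsigma_{k+1}$ and using $H_k\ge G_k^2/k$ gives an upper bound for $\varsigma_{k+1}$ in terms of $G_k$ and $H_k$ of precisely the shape that serves as the input of the recursion formula in~\cite{ChengYang07}, with $4/n$ there replaced by $C_1$.

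Then I would apply the recursion formula itself: its inductive scheme propagates the bound from the trivial case $k=1$ to all $k$ and yields $\varsigma_{k+1}\le(1+C_1)k^{C_1/2}\varsigma_1$, which is the assertion once $C_1=\tfrac{4\delta}{n\varepsilon}$ is substituted. The one point deserving a line of justification — and the only place where anything could conceivably go wrong — is verifying that every estimate in the Cheng--Yang induction uses only the positivity and the structural role of the constant, so that replacing $4/n$ by the general $C_1>0$ is legitimate and simply turns the exponent $2/n$ and the prefactor $1+4/n$ into $C_1/2$ and $1+C_1$. This is routine, and the corollary follows.
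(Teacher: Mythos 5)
Your proposal is correct and follows essentially the same route as the paper: both verify that $0<\varsigma_1\le\varsigma_2\le\cdots\to\infty$ and that \eqref{T-parallel} is a Yang-type quadratic inequality with $4/n$ replaced by $\tfrac{4\delta}{n\varepsilon}$, and then invoke the recursion formula of Cheng and Yang~\cite{ChengYang07} to conclude. Your one flagged concern is harmless, since the Cheng--Yang recursion formula is stated for an arbitrary positive parameter in place of the dimension, so the substitution is immediate.
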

For the Laplace-Beltrami operator, we have $\varepsilon=\delta=1$, then from the classical Weyl’s asymptotic formula for the eigenvalues~\cite{Weyl}, we know that the previous estimate is optimal in the sense of the order on $k$. 

Let us summarize more applications of Corollary~\ref{cor-Tparalell}. We begin with a second Yang type inequality, namely,
\begin{equation}\label{Equation-1.6}
    \varsigma_{k+1} \leq \Big(1+\frac{4\delta}{n\varepsilon}\Big)\frac{1}{k}\sum_{i=1}^k\varsigma_i,
\end{equation}
which follows by a direct computation from \eqref{T-parallel}. It generalizes the second Yang inequality, see Ashbaugh~\cite[Inequality~(1.10)]{Ashbaugh}.

Inequality~\eqref{T-parallel} is a quadratic inequality  of $\lambda_{k+1}$, which generalizes the first Yang inequality (see~\cite[Inequality~(1.7)]{Ashbaugh}), and by solving it we obtain an upper bound for $\lambda_{k+1}$, and also an estimate for the gap between consecutive eigenvalues, see Corollary~\ref{cor-yang-type-ineq}.

As already noted in \cite{Ashbaugh}, one can be shown that the first Yang inequality implies the second Yang inequality, which in turn implies the inequality of Hile and Protter~\cite{HileProtter}(see Inequality~1.6 in \cite{Ashbaugh}), besides, the inequality of Hile and Protter implies the inequality of Payne et al.~\cite{Payne-etal}(see Inequality~1.5 in \cite{Ashbaugh}). In the same way, we highlight that the first inequality in Corollary~\ref{cor-yang-type-ineq} is better than Inequality~\ref{Equation-1.6}.

\begin{cor}\label{cor-yang-type-ineq}
Under the same setup as in Corollary~\ref{cor-Tparalell}, we have
\begin{align*}
    \varsigma_{k+1} \leq& \Big(1+\frac{2\delta}{n\varepsilon}\Big)\frac{1}{k}\sum_{i=1}^k\varsigma_i+\Big[\Big(\frac{2\delta}{n\varepsilon}\frac{1}{k}\sum_{i=1}^k\varsigma_i\Big)^2 - \Big(1+\frac{4\delta}{n\varepsilon}\Big)\frac{1}{k}\sum_{j=1}^k\Big(\varsigma_j -  \frac{1}{k}\sum_{i=1}^k\varsigma_i\Big)^2 \Big]^{\frac{1}{2}}
\end{align*}
and
\begin{equation*}
    \varsigma_{k+1} - \varsigma_k\leq 2\Big[\Big(\frac{2\delta}{n\varepsilon}\frac{1}{k}\sum_{i=1}^k\varsigma_i\Big)^2 - \Big(1+\frac{4\delta}{n\varepsilon}\Big)\frac{1}{k}\sum_{j=1}^k\Big(\varsigma_j -  \frac{1}{k}\sum_{i=1}^k\varsigma_i\Big)^2 \Big]^{\frac{1}{2}}.
\end{equation*}
\end{cor}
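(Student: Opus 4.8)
The starting point is the quadratic inequality \eqref{T-parallel}, which we rewrite in a form amenable to the algebraic manipulation pioneered by Yang and refined by Cheng–Yang. Introduce the abbreviations $\Lambda = \frac{1}{k}\sum_{i=1}^k \varsigma_i$ and $\mathcal{T} = \frac{1}{k}\sum_{i=1}^k (\varsigma_i - \Lambda)^2 = \frac{1}{k}\sum_{i=1}^k \varsigma_i^2 - \Lambda^2$, and set $\mu = \frac{2\delta}{n\varepsilon}$ so that $\frac{4\delta}{n\varepsilon} = 2\mu$. Expanding both sides of \eqref{T-parallel} and collecting the sums $\sum \varsigma_i$, $\sum \varsigma_i^2$, together with the appearances of $\varsigma_{k+1}$, the plan is to reorganize \eqref{T-parallel} into a quadratic inequality in the single variable $\varsigma_{k+1}$ of the shape
\begin{equation*}
    \varsigma_{k+1}^2 - 2(1+\mu)\Lambda\,\varsigma_{k+1} + \Big[(1+2\mu)\Lambda^2 + (1+2\mu)\mathcal{T} - \mu\,\text{(something)}\Big] \le 0,
\end{equation*}
or more precisely into the canonical form $\varsigma_{k+1}^2 - 2A\varsigma_{k+1} + B \le 0$ with $A = (1+\mu)\Lambda$ and $B$ chosen so that $A^2 - B = \big(\mu\Lambda\big)^2 - (1+2\mu)\mathcal{T}$. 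The first displayed inequality of the corollary is then exactly $\varsigma_{k+1} \le A + \sqrt{A^2 - B}$, the larger root of the quadratic, while the second is obtained by feeding $\varsigma_{k+1} \ge \varsigma_k$ back into the quadratic: since $\varsigma_k$ satisfies $\varsigma_k^2 - 2A\varsigma_k + B \ge$ (the value at $\varsigma_k$, which need not be negative), one instead subtracts the relation at $\varsigma_{k+1}$ from a corresponding bound at $\varsigma_k$ and uses $\varsigma_{k+1} + \varsigma_k - 2A \ge -(2\sqrt{A^2-B})$ after noting $\varsigma_k \le \Lambda + \sqrt{\mathcal T}\,$-type control; the net effect is the gap bound $\varsigma_{k+1} - \varsigma_k \le 2\sqrt{A^2 - B}$.

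Concretely, the key steps in order are: (i) substitute $\varsigma_{k+1} - \varsigma_i = (\varsigma_{k+1} - \Lambda) + (\Lambda - \varsigma_i)$ throughout \eqref{T-parallel} and expand, exploiting $\sum_{i=1}^k(\Lambda - \varsigma_i) = 0$ to kill the cross terms that are linear in $(\Lambda - \varsigma_i)$; (ii) the left side becomes $k(\varsigma_{k+1}-\Lambda)^2 + k\mathcal{T}$ and the right side becomes $2\mu\big[k(\varsigma_{k+1}-\Lambda)\Lambda - k\mathcal{T}\big]$ after the analogous expansion of $\sum(\varsigma_{k+1}-\varsigma_i)\varsigma_i$ and using $\sum \varsigma_i(\Lambda - \varsigma_i) = -k\mathcal{T}$; (iii) divide by $k$ and rearrange to isolate a quadratic in $x := \varsigma_{k+1} - \Lambda$, namely $x^2 - 2\mu\Lambda x + (1+2\mu)\mathcal{T} \le 0$; (iv) solve this quadratic to get $x \le \mu\Lambda + \sqrt{(\mu\Lambda)^2 - (1+2\mu)\mathcal{T}}$, which upon adding $\Lambda$ yields the first inequality; (v) for the gap, observe that the same quadratic evaluated at $\varsigma_k - \Lambda$ gives, after subtracting, a bound on $(\varsigma_{k+1}-\varsigma_k)$ in terms of the discriminant — the cleanest route is to note $\varsigma_k - \Lambda$ also lies between the two roots of $t^2 - 2\mu\Lambda t + (1+2\mu)\mathcal T$, so both $\varsigma_{k+1}-\Lambda$ and $\varsigma_k - \Lambda$ lie in an interval of length $2\sqrt{(\mu\Lambda)^2-(1+2\mu)\mathcal T}$, forcing their difference to be at most that length.

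The routine part is the bookkeeping in steps (i)–(ii): keeping track of which cross terms vanish by $\sum(\Lambda - \varsigma_i)=0$ and correctly identifying $\sum \varsigma_i(\varsigma_i - \Lambda) = k\mathcal T$. The only genuinely delicate point is step (v): one must check that $\varsigma_k - \Lambda$ really does lie between the two roots of the relevant quadratic — equivalently, that plugging $t = \varsigma_k - \Lambda$ into $t^2 - 2\mu\Lambda t + (1+2\mu)\mathcal T$ gives a nonpositive value. This is where the recursion structure matters: one applies the already-established first inequality with $k$ replaced by $k-1$ (or invokes the monotonicity $\varsigma_k \le \varsigma_{k+1}$ together with the bound just proved) to confirm $\varsigma_k$ does not overshoot the larger root, and $\varsigma_k \ge \varsigma_1 > 0$ together with a variance estimate keeps it above the smaller root. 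I expect this verification — rather than any hard inequality — to be the main obstacle, and it is precisely the step handled in \cite{ChengYang07} and \cite{Ashbaugh} for the Laplacian, so the argument should transfer essentially verbatim once \eqref{T-parallel} is in the canonical quadratic form above.
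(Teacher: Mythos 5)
Your steps (i)--(iv) are correct and are essentially the paper's own route: the paper rewrites \eqref{T-parallel} as the quadratic $k\varsigma_{k+1}^2-\bigl(2+\tfrac{4\delta}{n\varepsilon}\bigr)\varsigma_{k+1}\sum_{i=1}^k\varsigma_i+\bigl(1+\tfrac{4\delta}{n\varepsilon}\bigr)\sum_{i=1}^k\varsigma_i^2\le 0$, notes the discriminant is nonnegative, and solves it following \cite[Theorem~3]{GomesMiranda}; your version centered at the mean $\Lambda$ is the same quadratic in the variable $x=\varsigma_{k+1}-\Lambda$, and it does give the first displayed inequality.

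The genuine gap is in step (v), exactly at the point you flag. That $\varsigma_k$ does not exceed the larger root is indeed immediate from $\varsigma_k\le\varsigma_{k+1}$ and the bound just proved; but your proposed reason why it stays above the smaller root --- ``$\varsigma_k\ge\varsigma_1>0$ together with a variance estimate'' --- cannot work. In your notation the smaller root is $z_-=(1+\mu)\Lambda-\sqrt{(\mu\Lambda)^2-(1+2\mu)\mathcal T}\ge\Lambda$, and the level-$k$ inequality \emph{alone} is compatible with $\varsigma_k<z_-$ and with failure of the gap bound: take the abstract data $\varsigma_1=\epsilon$, $\varsigma_2=\cdots=\varsigma_k=1$ with $\epsilon$ small and $k$ large, and $\varsigma_{k+1}$ equal to the larger root; this satisfies the ordering and \eqref{T-parallel} for that $k$, yet $\varsigma_k<z_-$, so $\varsigma_{k+1}-\varsigma_k>2\sqrt{(\mu\Lambda)^2-(1+2\mu)\mathcal T}$. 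Hence no argument using only positivity, monotonicity and the variance of $\varsigma_1,\dots,\varsigma_k$ can close this step; extra input from the eigenvalue inequality at a lower level is indispensable (the data above violate it). The correct device --- the one behind Cheng--Yang's Corollary~1 in \cite{ChengYang05} and \cite[Theorem~3]{GomesMiranda}, to which the paper defers --- is to apply \eqref{T-parallel} with $k$ replaced by $k-1$: since the $i=k$ term vanishes from both sides, that instance reads $\sum_{i=1}^{k}(\varsigma_{k}-\varsigma_i)^2\le\frac{4\delta}{n\varepsilon}\sum_{i=1}^{k}(\varsigma_{k}-\varsigma_i)\varsigma_i$, i.e.\ it is literally the statement that the \emph{same} level-$k$ quadratic is nonpositive at $\varsigma_k$, which in your centered form is $Q(\varsigma_k-\Lambda)\le 0$ with $Q(t)=t^2-2\mu\Lambda t+(1+2\mu)\mathcal T$. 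Then both $\varsigma_k-\Lambda$ and $\varsigma_{k+1}-\Lambda$ lie between the two real roots, and the gap is at most the distance $2\sqrt{(\mu\Lambda)^2-(1+2\mu)\mathcal T}$ between them; for $k=1$ the gap bound is \eqref{T-parallel} itself. With this replacement of your lower-root argument, the outline becomes a complete proof.
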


In the Laplacian case, Inequality~\eqref{Equation-1.6} is just the second Yang type inequality by Chen and Cheng~\cite[Inequality~(1.8)]{ChenCheng08}, whereas the first and second inequalities in Corollary~\ref{cor-yang-type-ineq} generalize Theorem~1 and Corollary~1 in Cheng and Yang~\cite{ChengYang05}, respectively, as well as Theorem~1 and Corollary~1 in Cheng and Yang~\cite{ChengYang06}.
 
From the Conjecture of Pólya~\cite{polya1961} and the work of Li and Yau~\cite{liYau1983}, Cheng and Yang~\cite{ChengYang09} proposed on a bounded domain $\Omega$ in an $n$-dimensional complete Riemannian manifold $M$ {\it the generalized conjecture of Pólya} and they gave a partial answer for it, see ~\cite[Theorem 1.1]{ChengYang09}. More recently, the second author and Miranda obtained an extension of the result of Cheng and Yang for the drifted Laplacian operator, see~\cite[Theorem~2]{GomesMiranda}. Here, we improve this latter result (see 
Remark~\ref{Impovement-CPolya}) as follows:
\begin{cor}\label{cor-conjectura}
Let $\Omega$ be a bounded domain in an $n$-dimensional complete Riemannian manifold $M^n$ isometrically immersed in $\mathbb{R}^m$, and $\lambda_i$ be the $i$-th eigenvalue of Problem~\eqref{problem1} for the drifted Laplacian operator. Then, we have
\begin{equation*}
    \frac{1}{k}\sum_{i=1}^k\varsigma_i \geq \frac{n}{\sqrt{(n+2)(n+4)}}\frac{4\pi^2}{(\omega_n \vol(\Omega) )^{\frac{2}{n}}}k^{\frac{2}{n}}, 
\end{equation*}
where $\varsigma_i = \lambda_i + \frac{n^2H_0^2+4C_0}{4}$, with $C_0 = \sup_\Omega \{\frac{1}{2}\Delta \eta - \frac{1}{4}|\nabla \eta|^2\}$ and $H_0=\sup_\Omega |{\bf H}|$.
\end{cor}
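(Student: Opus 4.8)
The plan is to obtain Corollary~\ref{cor-conjectura} by combining inequality~\eqref{T-parallel} with Weyl's asymptotic formula, and then running the recursion argument of Cheng and Yang~\cite{ChengYang09} on the shifted eigenvalues. First I would specialize the hypotheses. For the drifted Laplacian $T=\mathrm{Id}$, so $\varepsilon=\delta=1$ and $\nabla T=0$, hence $T_0=0$; also ${\bf H}_T={\bf H}$ is the ordinary mean curvature vector and $C_0=\sup_\Omega\{\tfrac12\Delta\eta-\tfrac14|\nabla\eta|^2\}$. Corollary~\ref{cor-Tparalell} therefore applies, and the numbers $\varsigma_i=\lambda_i+\tfrac{n^2H_0^2+4C_0}{4}$ form a non-decreasing sequence of positive reals satisfying
\begin{equation*}
    \sum_{i=1}^k(\varsigma_{k+1}-\varsigma_i)^2 \le \frac{4}{n}\sum_{i=1}^k(\varsigma_{k+1}-\varsigma_i)\,\varsigma_i, \qquad k\ge 1,
\end{equation*}
which is exactly the algebraic relation behind Cheng and Yang's treatment of the Dirichlet Laplacian; in particular, \eqref{Equation-1.6} and Corollary~\ref{cor-yang-type-ineq} hold here with $\varepsilon=\delta=1$.

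The second input is spectral asymptotics. The drift is a lower-order perturbation of the Laplace--Beltrami operator (the substitution $u\mapsto e^{-\eta/2}u$ turns $\Delta_\eta$ into $\Delta$ plus a bounded potential), so the Dirichlet eigenvalues of $\Delta_\eta$ on $\Omega$ satisfy the same leading Weyl law~\cite{Weyl} as those of $\Delta$, namely $\lambda_k\sim \tfrac{4\pi^2}{(\omega_n\vol(\Omega))^{2/n}}\,k^{2/n}$ as $k\to\infty$, with $\vol$ the Riemannian volume. Since $\varsigma_k-\lambda_k$ is the fixed constant $\tfrac{n^2H_0^2+4C_0}{4}$, the $\varsigma_k$ obey the same asymptotics and $\tfrac1k\sum_{i=1}^k\varsigma_i\sim\tfrac{n}{n+2}\tfrac{4\pi^2}{(\omega_n\vol(\Omega))^{2/n}}k^{2/n}$.

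I would then feed $\{\varsigma_i\}$ into the Cheng--Yang recursion. Writing $\Lambda_k=\tfrac1k\sum_{i=1}^k\varsigma_i$, the first inequality of Corollary~\ref{cor-yang-type-ineq} bounds $\varsigma_{k+1}$ by $(1+\tfrac2n)\Lambda_k$ plus a square root whose negative part is the variance $\tfrac1k\sum_i(\varsigma_i-\Lambda_k)^2$; inserting into it a lower bound for the $\varsigma_i$ of the form $c\,i^{2/n}$ (to be optimized) controls that variance and produces a recursion $\varsigma_{k+1}\le\Lambda_k\,g_n(k)$ with $g_n(k)\to\tfrac{n+2}{n}$, hence a recursion for the partial sums $A_k=\sum_{i=1}^k\varsigma_i$. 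Letting $\Pi_k$ be the Gamma-ratio product realizing this recursion with equality, one checks $A_k/\Pi_k$ is non-increasing, so it is bounded below by its limit, which Weyl's formula identifies; a standard lower estimate for $\Pi_k$ in terms of $k^{1+2/n}$ then yields
\begin{equation*}
    \frac1k\sum_{i=1}^k\varsigma_i \ge \frac{n}{\sqrt{(n+2)(n+4)}}\,\frac{4\pi^2}{(\omega_n\vol(\Omega))^{2/n}}\,k^{2/n},
\end{equation*}
the admissible constant being $\tfrac{n}{\sqrt{(n+2)(n+4)}}=\sqrt{\tfrac{n}{n+2}\cdot\tfrac{n}{n+4}}$ times the Weyl constant. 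The main obstacle is this last step: one must make the simultaneous induction self-consistent, controlling the variance correction in $g_n(k)$, the monotonicity of $A_k/\Pi_k$, and the gap between $\Pi_k$ and $k^{1+2/n}$ \emph{uniformly in $k$}; it is exactly the accumulated slack in these estimates that degrades the Li--Yau constant $\tfrac{n}{n+2}$ to $\tfrac{n}{\sqrt{(n+2)(n+4)}}$, and showing the loss is no worse than that is the delicate part, everything else being routine once~\eqref{T-parallel} and the Weyl law are in hand.
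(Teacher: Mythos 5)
Your setup is fine (with $T=I$ you correctly get $\varepsilon=\delta=1$, $T_0=0$, hence \eqref{T-parallel} with constant $4/n$ for the shifted numbers $\varsigma_i$, and the drifted Laplacian does obey the same leading Weyl law as $\Delta$), but the step that actually produces the constant $\tfrac{n}{\sqrt{(n+2)(n+4)}}$ is missing, and the route you sketch for it would not deliver it. You propose a recursion $\varsigma_{k+1}\le \Lambda_k\,g_n(k)$ obtained from Corollary~\ref{cor-yang-type-ineq} by ``inserting a lower bound $c\,i^{2/n}$ for the $\varsigma_i$'' to control the variance term; such a pointwise lower bound on individual eigenvalues is not available in this generality (Weyl only gives asymptotics, and a bound of this very type on averages is what is being proved), so the scheme is essentially circular, and you yourself defer the uniform-in-$k$ bookkeeping that would fix the constant as ``the delicate part.'' The $\Gamma$-ratio/partial-sum machinery you describe is the Cheng--Yang device for \emph{upper} bounds on $\lambda_{k+1}$; it is not how the P\'olya-type \emph{lower} bound is obtained, and attributing the factor $\tfrac{n}{\sqrt{(n+2)(n+4)}}$ to ``accumulated slack'' misses the actual mechanism.

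The paper's proof (following Cheng--Yang~\cite{ChengYang09} and \cite[Theorem~2]{GomesMiranda}) is short once the right monotone quantity is used. From \eqref{T-parallel-2} the recursion formula of \cite{ChengYang07} gives the monotonicity
\begin{equation*}
\frac{F_{k+l}}{(k+l)^{4/n}}\le\frac{F_k}{k^{4/n}},\qquad
F_k:=\Big(1+\tfrac{2}{n}\Big)\Big(\tfrac1k\sum_{i=1}^k\varsigma_i\Big)^2-\tfrac1k\sum_{i=1}^k\varsigma_i^2 ,
\end{equation*}
so $F_k/k^{4/n}$ dominates its limit as $l\to\infty$. That limit is computed from \emph{both} Weyl limits, the second-moment one
$\lim_k \tfrac{1}{k}\sum_i\varsigma_i^2/k^{4/n}=\tfrac{n}{n+4}\,\tfrac{16\pi^4}{(\omega_n\vol\Omega)^{4/n}}$ as well as the first-moment one you quoted; it equals $\tfrac{2n}{(n+2)(n+4)}\tfrac{16\pi^4}{(\omega_n\vol\Omega)^{4/n}}$. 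Combining this with the Cauchy--Schwarz bound $\tfrac1k\sum_i\varsigma_i^2\ge\Lambda_k^2$, i.e.\ $F_k\le\tfrac{2}{n}\Lambda_k^2$, immediately yields $\Lambda_k\ge\tfrac{n}{\sqrt{(n+2)(n+4)}}\tfrac{4\pi^2}{(\omega_n\vol\Omega)^{2/n}}k^{2/n}$ with the exact stated constant and no induction or optimization at all. In short: you are missing the monotone functional $F_k$ and the second-moment Weyl asymptotics, which are the two ingredients that make the argument work.
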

 
Theorem~\ref{theorem1.2} immediately implies the next result.
\begin{cor}\label{lower-order-estimate}
Let $\Omega$ be a bounded domain in an $n$-dimensional complete Riemannian manifold $M^n$ isometrically immersed in $\mathbb{R}^m$ and denote by $\lambda_i$ the $i$-th eigenvalue of Problem~\eqref{problem1} for the drifted Cheng-Yau operator, for $i~=~1,\ldots,n$. Then, we have
\begin{equation*}
    \frac{\varsigma_2+\cdots+\varsigma_{n+1}}{\varsigma_1}\leq\frac{4\delta}{\varepsilon}+n.
\end{equation*}
\end{cor}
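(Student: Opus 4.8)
The plan is to specialize Theorem~\ref{theorem1.2} to the drifted Cheng-Yau operator and then re-express both sides in terms of the shifted quantities $\varsigma_i$ defined in \eqref{sigma-i}. First I would recall that for the drifted Cheng-Yau operator the tensor $T$ is divergence-free, so that $T_0=\sup_\Omega|\tr(\nabla T)|=0$. Substituting $T_0=0$ into the right-hand side of the inequality in Theorem~\ref{theorem1.2} collapses it to
\[
    \frac{4\delta}{\varepsilon}\Big[\lambda_1+\frac{n^2H_0^2+4C_0}{4\delta}\Big]=\frac{4\delta}{\varepsilon}\,\varsigma_1,
\]
where the last equality is just the definition $\varsigma_1=\lambda_1+\frac{n^2H_0^2+4C_0}{4\delta}$.

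Next I would treat the left-hand side. Since $\varsigma_{i+1}-\varsigma_1=\lambda_{i+1}-\lambda_1$ (the additive constant cancels), we have
\[
    \sum_{i=1}^n(\lambda_{i+1}-\lambda_1)=\sum_{i=1}^n(\varsigma_{i+1}-\varsigma_1)=(\varsigma_2+\cdots+\varsigma_{n+1})-n\varsigma_1.
\]
Combining the two displays, Theorem~\ref{theorem1.2} reads
\[
    (\varsigma_2+\cdots+\varsigma_{n+1})-n\varsigma_1\leq\frac{4\delta}{\varepsilon}\,\varsigma_1.
\]

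Finally, since $\varsigma_1=\lambda_1+\frac{n^2H_0^2+4C_0}{4\delta}>0$ by \eqref{spectrum}, I would divide both sides by $\varsigma_1$ and rearrange to obtain
\[
    \frac{\varsigma_2+\cdots+\varsigma_{n+1}}{\varsigma_1}\leq\frac{4\delta}{\varepsilon}+n,
\]
which is the claimed estimate. There is essentially no obstacle here: the entire content is in Theorem~\ref{theorem1.2}, and the only points requiring care are the vanishing of $T_0$ in the divergence-free case and the bookkeeping that turns $\sum(\lambda_{i+1}-\lambda_1)$ into $(\varsigma_2+\cdots+\varsigma_{n+1})-n\varsigma_1$; both are routine.
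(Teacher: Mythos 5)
Your proposal is correct and coincides with the paper's own (unwritten) argument: the paper simply notes that Theorem~\ref{theorem1.2} immediately implies the corollary, and your steps—setting $T_0=0$ since the drifted Cheng--Yau operator has $\dv T=0$, recognizing the bracket as $\varsigma_1$ from \eqref{sigma-i}, rewriting $\sum_{i=1}^n(\lambda_{i+1}-\lambda_1)=(\varsigma_2+\cdots+\varsigma_{n+1})-n\varsigma_1$, and dividing by $\varsigma_1>0$—are exactly that immediate specialization.
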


If $\Omega$ is a bounded domain in an $n$-dimensional complete Riemannian manifold $M^n$ isometrically minimally immersed in $\mathbb{R}^m$, $\eta = constant$ and $T=I$, then ${\bf H}_T={\bf H}=0$ and from Corollaries~\ref{cor-Tparalell} and \ref{lower-order-estimate}, we have
\begin{equation}\label{Equation-(1.8)}
    \sum_{i=1}^k(\lambda_{k+1}-\lambda_i)^2 \leq \frac{4}{n}\sum_{i=1}^k(\lambda_{k+1}-\lambda_i)\lambda_i \quad \mbox{and} \quad \frac{\lambda_2 + \cdots + \lambda_{n+1}}{\lambda_1} \leq 4+n,
\end{equation}
which are the inequalities obtained by Chen and Cheng \cite{ChenCheng08} in Corollaries~1.2 and 1.3, respectively, and they observed that once $\mathbb{R}^n$ is minimally immersed in $\mathbb{R}^m, n<m$, then the first inequality in \eqref{Equation-(1.8)} recovers the first Yang's inequality~\cite{Yang}. Therefore, as we already before mentioned, the quadratic inequality in Theorem~\ref{theorem1.1} is an extension of the quadratic inequality of Yang~\cite{Yang} to the operator $\mathscr{L}$ in $(\eta, T)$-divergence form in bounded domains in a Riemannian manifold isometrically immersed in a Euclidean space. 
 
It is worth mentioning here an interesting geometric interpretation of the constant $C_0$ in Corollary~\ref{cor-conjectura} as before mentioned by the authors of the present paper in \cite{GomesAraujoFilho}. If $g = g_0 + e^{-\eta}d\theta^2$ is the warped metric on the product $\Omega\times \mathbb{S}^1$, where $g_0$ stands for the canonical metric on the domain $\Omega\subset\mathbb{R}^n$, whereas $d\theta^2$ is the canonical metric of the unit sphere $\mathbb{S}^1$, then the scalar curvature of $g$ is given by $\frac{1}{2}\Delta \eta - \frac{1}{4}|\nabla \eta |^2$.
Hence, $C_0$ can be obtained as the supremum of the scalar curvature with respect to the warped metric $g$ on $\Omega \times \mathbb{S}^1$. Furthermore, notice that the constant $C_0$ appears naturally since it depends only on the drifting function $\eta$ and we do not place any conditions on this function. Here, we find a domain and a drifting function $\eta$ to answer positively the following natural question:

\vspace{0.2cm}
\emph{Is it possible to get some domain so that the inequalities obtained from Theorems~\ref{theorem1.1} and \ref{theorem1.2} do not depend on the constant $C_0$ for some non-trivial drifting function $\eta$?}

\vspace{0.2cm}
We give a positive answer to this question by applying our results for the Gaussian shrinking soliton, see Remark~\ref{remark-1.2}. Recall that the triple $(M, \langle,\rangle, \eta)$ is called a gradient Ricci soliton
if the Bakry-Emery Ricci tensor $\rc_\eta := \rc + \nabla^2 \eta$ is a multiple of its metric $\langle,\rangle$, i.e. $\rc_\eta=\lambda \langle,\rangle$, for some constant $\lambda$. They are self-similar solutions of the Hamilton-Ricci flow and are classified according to the sign of $\lambda$: It is called steady for $\lambda = 0$, shrinking for $\lambda>0$, and expanding for $\lambda<0$. In particular, the Gaussian shrinking soliton is the triple $(\mathbb{R}^n,\langle,\rangle_{can}, \frac{1}{4}|x|^2)$, where $\langle,\rangle_{can}$ is the standard Euclidean metric on $\mathbb{R}^n$. We highlight that the Bakry-Emery Ricci tensor $\rc_\eta$ on the gradient shrinking Ricci soliton is another example of a symmetric and positive definite tensor, which is also divergence-free.

In the case of the Gaussian shrinking soliton, we take $\eta(x) = \frac{1}{4}|x|^2$, and then $\Delta \eta = \frac{n}{2}$ and $|\nabla \eta|^2=\frac{1}{4}|x|^2$, hence, for $T=I$, from \eqref{C_0} we get
\begin{equation*}
    C_0 = \sup_\Omega \Big\{\frac{1}{2}\Delta \eta - \frac{1}{4}|\nabla \eta|^2\Big\} \leq \frac{n}{4}-\frac{1}{16}\inf_{{\Omega}}|x|^2.
\end{equation*}

From the previous inequality and Theorems~\ref{theorem1.1} and \ref{theorem1.2}, we obtain the following inequalities of eigenvalues of the Dirichlet problem for the drifted Laplacian operator on the Gaussian shrinking soliton.
\begin{cor}\label{gaussian-corollary}
Let $\Omega$ be a bounded domain in Gaussian shrinking soliton $(\mathbb{R}^n,\langle,\rangle_{can}, \frac{1}{4}|x|^2)$ and $\lambda_i$ be the $i$-th eigenvalue of Problem~\eqref{problem1} for the drifted Laplacian operator on $\Omega$. 
Then, we have
\begin{equation*}
    \sum_{i=1}^k(\lambda_{k+1}-\lambda_i)^2 \leq \frac{4}{n}\sum_{i=1}^k(\lambda_{k+1}-\lambda_i)\Big(\lambda_i+\frac{n}{4}-\frac{1}{16}\inf_\Omega |x|^2\Big),
\end{equation*}
and
\begin{equation*}
    \sum_{i=1}^n(\lambda_{k+1}-\lambda_1) \leq 4\Big(\lambda_1+\frac{n}{4}-\frac{1}{16}\inf_\Omega|x|^2\Big).
\end{equation*}
In particular, for any positive real number $r_0>4n$ we consider the annular bounded domain $\Omega = \big\{ x \in \mathbb{R}^n; 4n < |x|^2 < r_0^2 \big\}$ so that $\inf_\Omega|x|^2=4n$, and then
\begin{equation}\label{eigenvalues-invariance}
    \sum_{i=1}^k(\lambda_{k+1}-\lambda_i)^2 \leq \frac{4}{n}\sum_{i=1}^k(\lambda_{k+1}-\lambda_i)\lambda_i, \quad \mbox{and} \quad \sum_{i=1}^n(\lambda_{k+1}-\lambda_1) \leq 4\lambda_1.
\end{equation}
\end{cor}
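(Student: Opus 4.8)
The plan is to obtain the corollary as a direct specialization of Theorems~\ref{theorem1.1} and~\ref{theorem1.2}. Here the ambient construction is as simple as possible: take $M^n=\mathbb{R}^n$ with the identity map as the isometric immersion into $\mathbb{R}^m=\mathbb{R}^n$, the $(1,1)$-tensor $T=I$, and the drifting function $\eta(x)=\tfrac14|x|^2$, so that $\mathscr{L}=\Delta_\eta$ is the drifted Laplacian. All that is really needed is to evaluate each of the constants $\varepsilon,\delta,T_0,H_0,C_0,\eta_0$ appearing on the right-hand sides of the two theorems in this setting, and then substitute.

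First I would record the elementary data. Since $T=I$ is parallel and has all its eigenvalues equal to $1$, one may take $\varepsilon=\delta=1$; moreover $\nabla T\equiv 0$, hence $T_0=\sup_\Omega|\tr(\nabla T)|=0$. In particular the summand $2\delta T_0\eta_0$ vanishes, so the value of $\eta_0$ plays no role. Because the immersion $\mathbb{R}^n\hookrightarrow\mathbb{R}^n$ is totally geodesic, the generalized mean curvature vector ${\bf H}_T={\bf H}$ is identically zero, so $H_0=0$. It remains to compute $C_0$: from $\eta(x)=\tfrac14|x|^2$ we get $\nabla\eta=\tfrac12 x$ and $\Delta\eta=\tfrac n2$, and with $T=I$ formula~\eqref{C_0} reduces to $\tfrac12\dv(T^2(\nabla\eta))-\tfrac14|T(\nabla\eta)|^2=\tfrac12\Delta\eta-\tfrac14|\nabla\eta|^2=\tfrac n4-\tfrac1{16}|x|^2$, whence $C_0=\tfrac n4-\tfrac1{16}\inf_\Omega|x|^2$, the bound stated just before the corollary.

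Now I would substitute. Putting $\varepsilon=\delta=1$ and $T_0=H_0=0$ into the bracket of Theorem~\ref{theorem1.1} collapses it to $\lambda_i+C_0$, which gives the first displayed inequality; the same substitution in Theorem~\ref{theorem1.2} collapses its right-hand side to $4(\lambda_1+C_0)$, which gives the second. For the final assertion, on the annulus $\Omega=\{x\in\mathbb{R}^n:4n<|x|^2<r_0^2\}$ — a nonempty bounded domain with smooth boundary, since $r_0>4n$ ensures $r_0^2>4n$ — one has $\inf_\Omega|x|^2=4n$, so $C_0=\tfrac n4-\tfrac1{16}\cdot 4n=0$, and both inequalities reduce to the clean form~\eqref{eigenvalues-invariance}.

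No genuine difficulty arises: the corollary is purely a specialization, and the only points deserving a line of care are that the model immersion is totally geodesic (so $H_0=0$), that $T_0=0$ is exactly what removes any dependence on $\eta_0$, and that the inner radius $\sqrt{4n}$ of the annulus is precisely the value that makes $C_0$ vanish. (One should also read the left-hand side of the second inequality as $\sum_{i=1}^n(\lambda_{i+1}-\lambda_1)$, in accordance with Theorem~\ref{theorem1.2}.)
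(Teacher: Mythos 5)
Your proposal is correct and follows essentially the same route as the paper: the corollary is obtained by specializing Theorems~\ref{theorem1.1} and~\ref{theorem1.2} to $T=I$, $\eta(x)=\tfrac14|x|^2$ and the identity immersion $\mathbb{R}^n\hookrightarrow\mathbb{R}^n$, so that $\varepsilon=\delta=1$, $T_0=H_0=0$ and $C_0=\tfrac n4-\tfrac1{16}\inf_\Omega|x|^2$, and then noting that the annulus with inner radius $\sqrt{4n}$ makes $C_0=0$. Your remark that the second displayed sum should read $\sum_{i=1}^n(\lambda_{i+1}-\lambda_1)$ is also consistent with Theorem~\ref{theorem1.2}.
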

\begin{remark}\label{remark-1.2}
Notice that inequalities in \eqref{eigenvalues-invariance} do not depend on the constant $C_0$. They have the same behavior as the known eigenvalue estimates for the Laplacian case, see \cite{Yang} or \cite[Inequality~(1.7)]{ChengYang07}, and Chen and Cheng~\cite{ChenCheng08}, respectively.
\end{remark}

\section{Preliminaries}\label{preliminaries}

This section is short and serves to establish some basic notations and describe what is meant by properties of a $(1,1)$-tensor in a bounded domain $\Omega\subset M^n$ with smooth boundary $\partial\Omega$.

Throughout the paper, we are assuming the domains to be connected. Also, we are constantly using the identification of a $(0,2)$-tensor $T:\mathfrak{X}(M)\times\mathfrak{X}(M)\to~C^{\infty}(M)$ with its associated $(1,1)$-tensor $T:\mathfrak{X}(M)\to\mathfrak{X}(M)$ by the equation
\begin{equation*}
    \langle T(X), Y \rangle = T(X, Y).
\end{equation*}
In particular, the tensor $\langle , \rangle$ will be identified with the identity $I$ in $\mathfrak{X}(M)$. We observe that 
$\varepsilon \leq\langle T(X), X \rangle\leq\delta$, for any unit vector field $X$ on $\Omega$, implies
\begin{align}\label{T-property}
\varepsilon  \langle T(Y), Y\rangle \leq |T(Y)|^2 \leq \delta  \langle T(Y), Y\rangle \quad \mbox{for all} \quad Y \in \mathfrak{X}(M).
\end{align}
So, 
\begin{align}\label{T-norm}
    \varepsilon^2|\nabla \eta|^2 \leq |T(\nabla\eta)|^2 \leq \delta^2|\nabla\eta|^2.
\end{align}

For an $n$-dimensional complete Riemannian manifold $(M^n, \langle , \rangle)$  isometrically immersed in $\mathbb{R}^m$ we denote by $\alpha$ its second fundamental form and by ${\bf H}=\frac{1}{n}\tr (\alpha)$ its mean curvature vector. For a symmetric $(1, 1)$-tensor $T$ we have
\begin{equation*}
    {\bf H}_T=\frac{1}{n}\sum_{i,j=1}^nT(e_i, e_j)\alpha(e_i, e_j)=\frac{1}{n}\sum_{i=1}^n\alpha(T(e_i), e_i)=\frac{1}{n}\tr{(\alpha\circ T)}
\end{equation*}
where $\{e_1, e_2, \ldots, e_n\}$ is an orthonormal basis of $T_pM$ and ${\bf H}_T$ is called the generalized mean curvature vector at $p\in M$. We will use the following notations
\begin{align*}
    \tr(\nabla T):=\sum_{i=1}^n(\nabla T)(e_i, e_i), \quad |T|=\Big(\sum_{i=1}^n|T(e_i)|^2\Big)^{\frac{1}{2}}
\end{align*}
 and $\|\cdot\|_{L^2}$ for the canonical norm of a function in $L^2(\Omega,e^{-\eta}d\Omega)$. 
 
 Since $T$ is symmetric, notice that $\nabla_X T$ is also symmetric for each $X\in \mathfrak{X}(M)$, that is,
 \begin{equation}\label{symmetric-tensor}
     \langle (\nabla_X T)Y, Z\rangle = \langle Y, (\nabla_X T)Z \rangle, \quad \forall Y, Z \in \mathfrak{X}(M).
 \end{equation}
If $T$ is symmetric and divergence-free, then from \eqref{symmetric-tensor} we have
\begin{align*}
    0 = \dv T(X) = \sum_{i=1}^n \langle (\nabla_{e_i}T)(X), e_i \rangle = \sum_{i=1}^n \langle X, (\nabla_{e_i}T)(e_i)\rangle
    = \langle X, \sum_{i=1}^n (\nabla_{e_i}T)(e_i)\rangle
\end{align*}
for all $X \in \mathfrak{X}(M)$, and then $\tr(\nabla T)=\sum_{i=1}^n (\nabla_{e_i}T)(e_i) = 0$. 

Definition of $\eta$-divergence of $X$ (see Eq.~\eqref{eq1.1}) implies that
\begin{equation*}
    \dv_\eta(fX)=f\dv_\eta X + \langle \nabla f, X \rangle
\end{equation*}
and then
\begin{align}\label{property1}
    \mathscr{L}(f\ell) = f\mathscr{L}\ell + 2T(\nabla f, \nabla \ell) + \ell\mathscr{L}f
\end{align}
for all $f,\ell \in C^\infty(M)$. Besides, the divergence theorem is valid as follows:
\begin{equation}\label{property2}
   \int_\Omega\dv_\eta X dm = \int_{\partial\Omega} \langle X, \nu\rangle d\mu,
\end{equation}
where $dm = e^{-\eta}d\Omega$ is the weighted volume form on $\Omega$ and $d\mu = e^{-\eta}d\partial\Omega$ is the weighted area form on $\partial\Omega$ induced by the 
outward pointing unit normal vector field $\nu$ along $\partial \Omega$. In particular, by taking $X=T(\nabla f)$, we get 
\begin{equation*}
   \int_\Omega\mathscr{L}{f}dm = \int_{\partial\Omega}T(\nabla f, \nu) d\mu
\end{equation*}
and the integration by parts formula:
\begin{equation}\label{parts}
     \int_{\Omega}\ell\mathscr{L}{f}dm =-\int_\Omega T(\nabla\ell, \nabla f)dm + \int_{\partial\Omega}\ell T(\nabla f, \nu) d\mu
\end{equation}
for all $\ell, f \in C^\infty(M)$. Therefore, $\mathscr{L}$ is a formally self-adjoint operator in the Hilbert space $\mathcal{H}_0^1(\Omega,dm)$. Eigenspaces belonging to distinct eigenvalues are orthogonal in $L^2(\Omega,dm)$, which is the direct sum of all the eigenspaces. We refer
to the dimension of each eigenspace as the multiplicity of the eigenvalue. Thus the eigenvalue problem~\eqref{problem1} has a real and discrete spectrum
\begin{equation*}
    0 < \lambda_1 \leq \lambda_2 \leq \cdots \leq \lambda_k \leq \cdots\to\infty,
\end{equation*}
where each $\lambda_i$ is repeated according to its multiplicity. For eigenfunction $u_j$ corresponding to the eigenvalue $\lambda_j$, from Problem~\eqref{problem1} and integration by parts formula~\eqref{parts}, we have
\begin{equation}\label{lambda_i}
    \lambda_j\int_\Omega u_j^2dm = -\int_\Omega u_j\mathscr{L}u_jdm = \int_\Omega T(\nabla u_j, \nabla u_j)dm.
\end{equation}

To finish this section, we would like to refer here to the paper by the second author and Miranda~\cite[Section~2]{GomesMiranda} where is possible to find some geometric motivations to work with the operator $\mathscr{L}$ in the $(\eta,T)$-divergence form  and a Bochner-type formula for it on Riemannian manifolds. Besides, we highlight that the first author studied in~\cite{araujo2022inequalities} eigenvalues inequalities for $\mathscr{L}^2$ on complete Riemannian manifolds.

\section{Keystone technical lemmas}

In order to prove our results, we will need two technical lemmas. The first one has been proved by the second author and Miranda.
\begin{lem}[Gomes and Miranda\cite{GomesMiranda}]\label{lemma1}
Let $\Omega$ be a bounded domain in an $n$-dimensional complete Riemannian manifold $M$. Let $\lambda_i$ be the $i$-th eigenvalue of Problem~\eqref{problem1} and let $u_i$ be an $L^2(\Omega, dm)$-normalized real-valued eigenfunction corresponding to $\lambda_i$. Then, for any $f\in C^3(\Omega)\cap C^2(\partial \Omega)$, and $k$ integer, is valid
\begin{align*}
    \sum_{i=1}^k(\lambda_{k+1}-\lambda_i)^2 \int_{\Omega}T(\nabla f, \nabla f)u_i^2dm \leq 4\sum_{i=1}^k(\lambda_{k+1}-\lambda_i)\|T(\nabla f, \nabla u_i) + \frac{1}{2}u_i\mathscr{L}f \|^2_{L^2}.
\end{align*}
\end{lem}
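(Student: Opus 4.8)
The plan is to run the classical Yang-type commutator argument, adapted to the weighted operator $\mathscr{L}$ and the measure $dm=e^{-\eta}d\Omega$. Fix $k$, and for $1\le i\le k$ introduce the trial functions
\begin{equation*}
    \phi_i = fu_i - \sum_{j=1}^k a_{ij}u_j, \qquad a_{ij}=\int_\Omega fu_iu_j\,dm,
\end{equation*}
chosen so that $\int_\Omega\phi_iu_j\,dm=0$ for every $j=1,\dots,k$; observe $a_{ij}=a_{ji}$, and that $\phi_i\in\mathcal{H}_0^1(\Omega,dm)$ since $f\in C^3(\Omega)\cap C^2(\partial\Omega)$ and each $u_j$ (hence $fu_i$) vanishes on $\partial\Omega$. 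Because $\phi_i$ is orthogonal in $L^2(\Omega,dm)$ to the span of the first $k$ eigenfunctions, the variational characterization of $\lambda_{k+1}$ together with the integration by parts formula~\eqref{parts} — whose boundary term drops out since $\phi_i|_{\partial\Omega}=0$ — gives $\lambda_{k+1}\|\phi_i\|^2_{L^2}\le\int_\Omega T(\nabla\phi_i,\nabla\phi_i)\,dm=-\int_\Omega\phi_i\mathscr{L}\phi_i\,dm$.

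Next I would extract the algebra. Set $\psi_i:=2T(\nabla f,\nabla u_i)+u_i\mathscr{L} f$, so the right-hand side of the lemma equals $\sum_{i=1}^k(\lambda_{k+1}-\lambda_i)\|\psi_i\|^2_{L^2}$. The product rule~\eqref{property1} together with $\mathscr{L} u_i=-\lambda_iu_i$ gives $\mathscr{L}(fu_i)=-\lambda_ifu_i+\psi_i$, hence $\mathscr{L}\phi_i=-\lambda_i\phi_i+\psi_i-\sum_{j=1}^k(\lambda_i-\lambda_j)a_{ij}u_j$. Substituting this into the Rayleigh–Ritz inequality and using $\int_\Omega\phi_iu_j\,dm=0$ yields the basic estimate $(\lambda_{k+1}-\lambda_i)\|\phi_i\|^2_{L^2}\le-\int_\Omega\phi_i\psi_i\,dm$. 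Two integration-by-parts identities are now needed. First, applying~\eqref{parts} to $T(\nabla f,\nabla(fu_i^2))$ — the boundary term vanishes because $u_i|_{\partial\Omega}=0$ — and expanding $\nabla(fu_i^2)=u_i^2\nabla f+2fu_i\nabla u_i$, one finds $\int_\Omega fu_i\psi_i\,dm=-W_i$, where $W_i:=\int_\Omega T(\nabla f,\nabla f)u_i^2\,dm$. Second, formal self-adjointness of $\mathscr{L}$ applied to the pair $fu_i,u_j$ (both vanishing on $\partial\Omega$) gives $\int_\Omega\psi_iu_j\,dm=(\lambda_i-\lambda_j)a_{ij}$. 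Combining these, the basic estimate becomes $(\lambda_{k+1}-\lambda_i)\|\phi_i\|^2_{L^2}\le W_i+\sum_{j=1}^k(\lambda_i-\lambda_j)a_{ij}^2$, while the second identity shows that the $L^2(\Omega,dm)$-component of $\psi_i$ orthogonal to $\mathrm{span}\{u_1,\dots,u_k\}$ has squared norm $\|\psi_i\|^2_{L^2}-\sum_{j=1}^k(\lambda_i-\lambda_j)^2a_{ij}^2$.

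Finally I would upgrade the linear bound to the quadratic one. Writing $D_i:=\lambda_{k+1}-\lambda_i\ge0$, the identity $W_i=-\int_\Omega\phi_i\psi_i\,dm-\sum_j(\lambda_i-\lambda_j)a_{ij}^2$ (and the fact that $\int_\Omega\phi_i\psi_i\,dm$ is unchanged if $\psi_i$ is replaced by its component orthogonal to $\mathrm{span}\{u_1,\dots,u_k\}$, since $\phi_i$ is already orthogonal there) lets me apply Cauchy–Schwarz with a free parameter $\tau$, feed in the linear estimate for $D_i\|\phi_i\|^2_{L^2}$, and optimize in $\tau$ (the optimal value being $\tau=1$). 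This produces, for each $i$,
\begin{equation*}
    D_i^2W_i\le D_i\Big(\|\psi_i\|^2_{L^2}-\sum_{j=1}^k(\lambda_i-\lambda_j)^2a_{ij}^2\Big)-D_i^2\sum_{j=1}^k(\lambda_i-\lambda_j)a_{ij}^2.
\end{equation*}
Summing over $i=1,\dots,k$ and using the algebraic identity $D_i(\lambda_i-\lambda_j)^2+D_i^2(\lambda_i-\lambda_j)=D_iD_j(\lambda_i-\lambda_j)$ (which rests on $D_i+(\lambda_i-\lambda_j)=D_j$), the two double sums collapse to $-\sum_{i,j}D_iD_j(\lambda_i-\lambda_j)a_{ij}^2$, and this vanishes by antisymmetry under $i\leftrightarrow j$ because $a_{ij}=a_{ji}$. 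What survives is $\sum_iD_i^2W_i\le\sum_iD_i\|\psi_i\|^2_{L^2}$, and since $\|\psi_i\|^2_{L^2}=4\|T(\nabla f,\nabla u_i)+\tfrac12u_i\mathscr{L} f\|^2_{L^2}$ this is exactly the claimed inequality. The main obstacle is bookkeeping rather than ideas: one must verify the two integration-by-parts identities for the general $(\eta,T)$-operator so that every boundary contribution cancels, and then run the parametrized Cauchy–Schwarz step carefully enough that, after summation, the residual double sum is precisely the antisymmetric expression above — the symmetry $a_{ij}=a_{ji}$ and the telescoping identity $D_i+(\lambda_i-\lambda_j)=D_j$ being the two linchpins.
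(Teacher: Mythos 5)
Your argument is correct: the trial functions $\phi_i=fu_i-\sum_j a_{ij}u_j$, the two integration-by-parts identities, the parametrized Cauchy--Schwarz step, and the antisymmetry cancellation via $D_i+(\lambda_i-\lambda_j)=D_j$ and $a_{ij}=a_{ji}$ all check out and yield exactly the stated inequality. This is essentially the same Yang-type commutator argument used in the cited source \cite{GomesMiranda}, which the paper itself invokes without reproducing a proof, so there is nothing to add.
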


From Lemma~\ref{lemma1} and similar discussions as in the proof of Proposition~2 in \cite{GomesMiranda} we obtain our keystone technical lemma.
\begin{lem}\label{lemma2}
Let $\Omega$ be a bounded domain in an $n$-dimensional complete Riemannian manifold $M$ isometrically immersed in $\mathbb{R}^m$, $\lambda_i$ be the $i$-th eigenvalue of Problem~\eqref{problem1} and $u_i$ be an $L^2(\Omega, dm)$-normalized real-valued eigenfunction corresponding to $\lambda_i$. Then is valid
\begin{align*}
   &\sum_{i=1}^k (\lambda_{k+1}-\lambda_i)^2 \int_\Omega \tr(T)u_i^2dm\\
      \leq& 4\sum_{i=1}^k(\lambda_{k+1}-\lambda_i)\Bigg\{\|T (\nabla u_i)\|^2_{L^2} +\frac{n^2}{4}\int_{\Omega}u_i^2|{\bf H}_T|^2dm+ \int_\Omega u_i T(\tr(\nabla T), \nabla u_i)dm\\
   &+\int_\Omega u_i^2\Big(\frac{1}{2}\dv (T^2(\nabla \eta)) - \frac{1}{4}|T(\nabla \eta)|^2\Big)dm\\
   &+\frac{1}{4}\int_\Omega u_i^2\langle\tr(\nabla T), \tr(\nabla T) - 2T(\nabla \eta)\rangle dm \Bigg\}.
\end{align*}

\end{lem}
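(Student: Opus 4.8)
The plan is to specialize Lemma~\ref{lemma1} to the coordinate functions of the isometric immersion $x = (x_1,\dots,x_m)\colon M^n \to \mathbb{R}^m$, and then sum over $a = 1,\dots,m$. For each fixed $a$, apply the lemma with $f = x_a$; the left-hand side becomes $\sum_i (\lambda_{k+1}-\lambda_i)^2 \int_\Omega T(\nabla x_a, \nabla x_a) u_i^2\, dm$, and summing over $a$ collapses the integrand to $\sum_a T(\nabla x_a, \nabla x_a) = \sum_a \langle T(\nabla x_a), \nabla x_a\rangle = \tr(T)$, using that $\sum_a \nabla x_a \otimes \nabla x_a$ is the identity on $TM$ (the immersion is isometric). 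This produces exactly the left-hand side of the claimed inequality. So the work is entirely on the right-hand side: I must compute $\sum_a \|\, T(\nabla x_a, \nabla u_i) + \tfrac12 u_i \mathscr{L} x_a\,\|_{L^2}^2$ and show it is bounded by the bracketed expression in the statement.

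First I would expand the square pointwise:
\begin{equation*}
  \Big| T(\nabla x_a, \nabla u_i) + \tfrac12 u_i \mathscr{L} x_a \Big|^2
  = T(\nabla x_a,\nabla u_i)^2 + u_i\, T(\nabla x_a, \nabla u_i)\,\mathscr{L}x_a + \tfrac14 u_i^2 (\mathscr{L}x_a)^2,
\end{equation*}
and sum over $a$. For the first term, $\sum_a T(\nabla x_a, \nabla u_i)^2 = \sum_a \langle T(\nabla x_a), \nabla u_i\rangle^2$; since $\sum_a \nabla x_a \otimes \nabla x_a = I$, this equals $\langle T(\nabla u_i), T(\nabla u_i)\rangle = |T(\nabla u_i)|^2$, giving the $\|T(\nabla u_i)\|_{L^2}^2$ term. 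For the third term I need the identity $\sum_a (\mathscr{L} x_a)^2$. Writing $\mathscr{L} x_a = \square x_a - \langle \nabla \eta, T(\nabla x_a)\rangle$ is not quite available in general since $T$ need not be divergence-free here, so instead I use $\mathscr{L} x_a = \dv(T(\nabla x_a)) - \langle \nabla \eta, T(\nabla x_a)\rangle$ and the standard computation for $\dv(T(\nabla x_a))$ in terms of the second fundamental form: one gets $\sum_a \dv(T(\nabla x_a)) \cdot (\text{normal part}) $ related to $n {\bf H}_T$, while the tangential contribution brings in $\tr(\nabla T)$. Concretely, the expected identity is $\sum_a (\dv(T(\nabla x_a)))\,\nabla x_a = \tr(\nabla T)$ as a vector field on $M$ plus a normal term of length $n|{\bf H}_T|$; squaring and summing yields $\sum_a (\mathscr{L}x_a)^2 = |\tr(\nabla T)|^2 + n^2|{\bf H}_T|^2 - 2\langle \tr(\nabla T), T(\nabla\eta)\rangle + |T(\nabla\eta)|^2$ after accounting for the drift term. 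This is the source of the $\tfrac{n^2}{4}\int u_i^2 |{\bf H}_T|^2$, the $\tfrac14 \int u_i^2\langle \tr(\nabla T), \tr(\nabla T) - 2T(\nabla\eta)\rangle$, and a leftover $\tfrac14 \int u_i^2 |T(\nabla\eta)|^2$ which must be combined with the cross term.

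For the cross term $\sum_a u_i\, T(\nabla x_a, \nabla u_i)\,\mathscr{L}x_a$, I would use the contraction $\sum_a T(\nabla x_a, \nabla u_i)\,(\text{tangential part of } \mathscr{L}x_a \text{'s vector})$ to reduce it to $u_i\,T(\tr(\nabla T), \nabla u_i) - u_i\, T(T(\nabla\eta), \nabla u_i) = u_i\,T(\tr(\nabla T),\nabla u_i) - u_i \langle T^2(\nabla\eta), \nabla u_i\rangle$; note $\langle T^2(\nabla\eta),\nabla u_i\rangle = \tfrac12 \langle \nabla(u_i^2), T^2(\nabla\eta)\rangle / u_i$ is not right dimensionally, so more carefully $u_i \langle T^2(\nabla\eta), \nabla u_i\rangle = \tfrac12 \langle \nabla(u_i^2), T^2(\nabla\eta)\rangle$. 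Integrating this over $\Omega$ and applying the divergence theorem~\eqref{property2} (the boundary term vanishes since $u_i = 0$ on $\partial\Omega$) converts it to $-\tfrac12 \int_\Omega u_i^2 \dv(T^2(\nabla\eta))\, dm$, which supplies the $\tfrac12\dv(T^2(\nabla\eta))$ term. Combining this with the surplus $+\tfrac14 u_i^2|T(\nabla\eta)|^2$ from the third term assembles precisely $\int_\Omega u_i^2\big(\tfrac12 \dv(T^2(\nabla\eta)) - \tfrac14 |T(\nabla\eta)|^2\big)\, dm$, and the $u_i\, T(\tr(\nabla T),\nabla u_i)$ term survives as stated. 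The main obstacle is the bookkeeping in the second fundamental form computation of $\sum_a \dv(T(\nabla x_a))\,\nabla x_a$ and correctly separating its tangential part $\tr(\nabla T)$ from the normal part of length $n|{\bf H}_T|$; one must be careful that the cross terms between the tangential $\tr(\nabla T)$-part and the $T(\nabla\eta)$-part from the drift are tracked through both the squared term and the cross term so that the final regrouping is exact. This is where the structure of the proof of Proposition~2 in~\cite{GomesMiranda} is essential, and I would follow that template closely.
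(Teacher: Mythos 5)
Your proposal follows exactly the paper's route: take $f=x_\ell$ in Lemma~\ref{lemma1}, sum over the coordinate functions of the immersion, use the identities $\sum_\ell T(\nabla x_\ell,\nabla x_\ell)=\tr(T)$, $\sum_\ell|T(\nabla x_\ell,\nabla u_i)|^2=|T(\nabla u_i)|^2$, $\sum_\ell(\mathscr{L}x_\ell)^2=n^2|{\bf H}_T|^2+|\tr(\nabla T)-T(\nabla\eta)|^2$ and $\sum_\ell\mathscr{L}x_\ell\,T(\nabla x_\ell,\nabla u_i)=T(\tr(\nabla T),\nabla u_i)-T(T(\nabla\eta),\nabla u_i)$ (the paper likewise defers these to the computations in \cite{GomesMiranda}), and then integrate the drift part of the cross term by parts. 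So the strategy and all the key identities are the right ones.

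The one step that does not close as written is the integration by parts. Because the measure is $dm=e^{-\eta}d\Omega$, the divergence theorem~\eqref{property2} produces the \emph{weighted} divergence: $-\int_\Omega u_iT(T(\nabla\eta),\nabla u_i)\,dm=-\tfrac12\int_\Omega\langle T^2(\nabla\eta),\nabla u_i^2\rangle\,dm=\tfrac12\int_\Omega u_i^2\,\dv_\eta\big(T^2(\nabla\eta)\big)\,dm$, and $\dv_\eta\big(T^2(\nabla\eta)\big)=\dv\big(T^2(\nabla\eta)\big)-|T(\nabla\eta)|^2$. You wrote the plain divergence $\dv\big(T^2(\nabla\eta)\big)$ at this stage; with that, combining with the leftover $+\tfrac14u_i^2|T(\nabla\eta)|^2$ from $\sum_\ell(\mathscr{L}x_\ell)^2$ would yield $\tfrac12\dv\big(T^2(\nabla\eta)\big)+\tfrac14|T(\nabla\eta)|^2$, not the claimed $\tfrac12\dv\big(T^2(\nabla\eta)\big)-\tfrac14|T(\nabla\eta)|^2$. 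The extra $-|T(\nabla\eta)|^2$ contributed by the weight $e^{-\eta}$ is precisely what turns the $+\tfrac14$ into the $-\tfrac14$ in the statement, so once you restore $\dv_\eta$ there the regrouping is exact and your argument coincides with the paper's proof. A smaller cosmetic point: $\sum_\ell\dv(T(\nabla x_\ell))\nabla x_\ell$ is a tangential contraction and only produces $\tr(\nabla T)$; the normal part $n{\bf H}_T$ is read off from the $\mathbb{R}^m$-valued vector with components $\dv(T(\nabla x_\ell))$, not from that contraction, so the decomposition should be phrased for that vector before squaring.
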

\begin{proof} Let $x=(x_1, \ldots, x_m)$ be the position vector of the immersion of $M$ in $\mathbb{R}^m$. Taking $f=x_\ell$ in Lemma~\ref{lemma1} and summing over $\ell$ from 1 to $m$, we get
\begin{align}\label{eq4.17}
    &\sum_{i=1}^k(\lambda_{k+1}-\lambda_i)^2\sum_{\ell=1}^m\int_{\Omega}T(\nabla x_\ell, \nabla x_\ell) u_i^2dm\nonumber\\
    \leq& 4\sum_{i=1}^k(\lambda_{k+1}-\lambda_i)\int_\Omega\sum_{\ell=1}^m\Big|T(\nabla x_\ell, \nabla u_i) + \frac{1}{2}\mathscr{L} x_\ell u_i\Big|^2dm\nonumber\\
    =&4\sum_{i=1}^k(\lambda_{k+1}-\lambda_i)\int_{\Omega}\sum_{\ell=1}^m\Bigg[\frac{u_i^2}{4}(\mathscr{L} x_\ell)^2 + u_i \mathscr{L} x_\ell T(\nabla x_\ell, \nabla u_i)\\
    &+ |T(\nabla x_\ell, \nabla u_i)|^2 \Bigg]dm.\nonumber
\end{align}
Let $\{e_1, \ldots, e_m\}$ be a local orthonormal geodesic frame at $p\in M$ adapted to $M$. By a straightforward computation, similarly to Eq.~(3.17)-(3.24) in~\cite{GomesMiranda}, we obtain
\begin{align}\label{equation3.2}
      \quad \quad \sum_{\ell=1}^mT(\nabla x_\ell, \nabla x_\ell)=\sum_{\ell=1}^n\langle T(e_\ell), e_\ell \rangle = \tr(T),
\end{align}
\begin{align}\label{equation3.3}
      \sum_{\ell=1}^m|T(\nabla x_\ell, \nabla u_i)|^2= \sum_{\ell=1}^m | T(e_\ell, \nabla u_i)|^2 =|T(\nabla u_i)|^2,
\end{align}
\begin{align}
    \sum_{\ell=1}^m (\mathscr{L} x_\ell)^2&= |\tr{(\alpha\circ T)}|^2+|\tr(\nabla T) - T(\nabla \eta)|^2\nonumber\\
    &=n^2|{\bf H}_T|^2+\langle\tr(\nabla T), \tr(\nabla T) - 2T(\nabla \eta)\rangle+|T(\nabla \eta)|^2,
\end{align}
and
\begin{align}\label{equation3.5}
    &\sum_{\ell=1}^m\mathscr{L} x_\ell T(\nabla x_\ell, \nabla u_i)=T(\tr(\nabla T), \nabla u_i) - T(T(\nabla \eta), \nabla u_i).
\end{align}
Substituting \eqref{equation3.2}-\eqref{equation3.5} into \eqref{eq4.17} we get
\begin{align}\label{eq4.18}
   &\sum_{i=1}^k (\lambda_{k+1}-\lambda_i)^2 \int_\Omega \tr(T)u_i^2dm\nonumber\\
      \leq& 4\sum_{i=1}^k(\lambda_{k+1}-\lambda_i)\Bigg\{\|T (\nabla u_i)\|_{L^2} +\frac{n^2}{4}\int_{\Omega}u_i^2|{\bf H}_T|^2dm+ \int_\Omega u_i T(\tr(\nabla T), \nabla u_i)dm \nonumber\\
   &+\frac{1}{4}\int_\Omega u_i^2|T(\nabla \eta)|^2dm- \int_\Omega u_i  T(T(\nabla \eta), \nabla u_i)dm\\
   &+\frac{1}{4}\int_\Omega u_i^2\langle\tr(\nabla T), \tr(\nabla T) - 2T(\nabla \eta)\rangle dm \Bigg\}.\nonumber
\end{align}
Since $u_i|_{\partial \Omega}=0$ by divergence theorem \eqref{property2}, we have
\begin{align}\label{Eq-3.7}
    -&\int_\Omega u_i T(T(\nabla \eta),\nabla u_i) dm = -\frac{1}{2}\int_\Omega \langle T^2(\nabla \eta), \nabla u_i^2\rangle dm =\frac{1}{2}\int_\Omega u_i^2 \dv_\eta (T^2(\nabla \eta))dm.
\end{align}
Substituting the previous equality into Inequality~\eqref{eq4.18} and noticing that
\begin{align*}
    \dv_\eta (T^2(\nabla \eta)) = \dv (T^2(\nabla \eta)) - |T(\nabla \eta)|^2
\end{align*}
we complete the proof of the lemma.
\end{proof}

We now are in a position to prove the main theorems of this paper.

\section{Proof of theorems}
\subsection{Proof of Theorem~\ref{theorem1.1}}
\begin{proof}
The proof is a consequence of Lemma~\ref{lemma2}. We start by calculating
\begin{align*}
    &\frac{1}{4}\int_{\Omega}u_i^2\langle \tr(\nabla T), \tr(\nabla T) -2 T(\nabla \eta) \rangle dm\\
    =& \frac{1}{4} \int_{\Omega} u_i^2 | \tr(\nabla T)|^2dm - \frac{1}{2}\int_{\Omega}u_i^2\langle \tr(\nabla T), T(\nabla \eta) \rangle dm.
\end{align*}
Since $T_0=\sup_{\Omega}|\tr(\nabla T)|$ and $\eta_0=\sup_{\Omega}|\nabla \eta|$, we have
\begin{align*}
    \frac{1}{4} \int_{\Omega} u_i^2 | \tr(\nabla T)|^2dm \leq \frac{1}{4}T_0^2\int_\Omega u_i^2 dm =\frac{T_0^2}{4} ,
\end{align*}
and using \eqref{T-norm} we get
\begin{align*}
    -\frac{1}{2}\int_{\Omega}u_i^2\langle \tr(\nabla T), T(\nabla \eta) \rangle dm & \leq \frac{1}{2}\int_{\Omega}u_i^2|\tr(\nabla T)| |T(\nabla \eta)| dm \leq  \frac{\delta T_0\eta_0}{2}.
\end{align*}
Then,
\begin{align}\label{Equation-4.1}
    \frac{1}{4}\int_{\Omega}u_i^2\langle \tr(\nabla T), \tr(\nabla T) -2 T(\nabla \eta) \rangle dm \leq \frac{T_0^2}{4} +\frac{\delta T_0\eta_0}{2}.
\end{align}
Furthermore,
\begin{align}\label{eqq31}
    \int_\Omega u_i T(\tr(\nabla T), \nabla u_i) dm &\leq \Big(\int_\Omega u_i^2dm \Big)^{\frac{1}{2}} \Big(\int_\Omega \langle \tr(\nabla T), T(\nabla u_i)\rangle^2dm \Big)^{\frac{1}{2}} \nonumber\\
    & \leq T_0 \Big(\int_\Omega |T(\nabla u_i)|^2 dm\Big)^\frac{1}{2} = T_0\|T(\nabla u_i)\|_{L^2}
\end{align}
and
\begin{equation}\label{inequation-HT}
    \frac{n^2}{4}\int_\Omega u_i^2|{\bf H}_T|^2dm \leq \frac{n^2H_0^2}{4}\int_\Omega u_i^2dm = \frac{n^2H_0^2}{4},
\end{equation}
where $H_0= \sup_{\Omega}|{\bf H}_T|$. Since there exist positive real numbers $\varepsilon \langle X, X\rangle \leq \langle T(X), X\rangle \leq \delta \langle X, X \rangle$, for any vector field $X$ on $\Omega$, we have $n\varepsilon \leq \tr{(T)}$ and consequently
\begin{align}
    n\varepsilon = n\varepsilon \int_\Omega u_i^2dm \leq \int_\Omega \tr{(T)}u_i^2 dm.
\end{align}
Let us consider $C_0 = \sup_\Omega \Big\{\frac{1}{2}\dv (T^2(\nabla \eta)) - \frac{1}{4}|T(\nabla \eta)|^2\Big\}$ so that
\begin{align}\label{Equation-4.5}
    \int_\Omega u_i^2\Big(\frac{1}{2}\dv (T^2(\nabla \eta)) - \frac{1}{4}|T(\nabla \eta)|^2\Big)dm \leq C_0.
\end{align}
Substituting \eqref{Equation-4.1}-\eqref{Equation-4.5} into Lemma~\ref{lemma2}, we obtain
\begin{align}\label{equation5-3}
     n\varepsilon&\sum_{i=1}^k(\lambda_{k+1}-\lambda_i)^2 \nonumber\\
     \leq& 4\sum_{i=1}^k(\lambda_{k+1}-\lambda_i)\Big\{\|T(\nabla u_i)\|_{L^2}^2  + \frac{T_0^2}{4} + T_0\|T(\nabla u_i)\|_{L^2}\nonumber +\frac{\delta T_0\eta_0}{2}+ C_0 +\frac{n^2H_0^2}{4}\Big\}\nonumber\\
     =& 4 \sum_{i=1}^k(\lambda_{k+1}-\lambda_i)\Big\{\Big(\|T(\nabla u_i)\|_{L^2}  + \frac{1}{2}T_0\Big)^2 + \frac{n^2H_0^2+4C_0+2\delta T_0\eta_0}{4}\Big\}.
\end{align}
Moreover, from \eqref{T-property} and \eqref{lambda_i} we have
\begin{equation}\label{equation5-4}
    \|T(\nabla u_i)\|_{L^2}^2=\int_{\Omega} T(T(\nabla u_i), \nabla u_i) dm \leq \delta \int_{\Omega} T(\nabla u_i, \nabla u_i) dm = \delta\lambda_i.
\end{equation}
Therefore, from \eqref{equation5-3} and \eqref{equation5-4} we get
\begin{align*}
     \sum_{i=1}^k(\lambda_{k+1}-\lambda_i)^2 &\leq \frac{4}{n\varepsilon} \sum_{i=1}^k(\lambda_{k+1}-\lambda_i)\Big[\Big(\sqrt{\delta\lambda_i}  + \frac{1}{2}T_0 \Big)^2 + \frac{n^2H_0^2+4C_0+2\delta T_0\eta_0}{4}\Big]\\
     &= \frac{4\delta}{n\varepsilon} \sum_{i=1}^k(\lambda_{k+1}-\lambda_i)\Big[\Big(\sqrt{\lambda_i}  + \frac{T_0}{2\sqrt{\delta}} \Big)^2 + \frac{n^2H_0^2+4C_0+2\delta T_0\eta_0}{4\delta}\Big],
\end{align*}
which complete the proof of Theorem~\ref{theorem1.1}.
\end{proof}

\subsection{Proof of Theorem~\ref{theorem1.2}}
\begin{proof}
Let $x=(x_1, \ldots, x_m)$ be the position vector of the immersion of $M$ in $\mathbb{R}^m$. Let us consider the matrix $D=(d_{ij})_{m \times m}$ where
\begin{equation*}
    d_{ij}:= \int_\Omega x_iu_1 u_{j+1} dm.
\end{equation*}
From the orthogonalization of Gram and Schmidt, we know that there exists an upper triangle matrix $R=(r_{ij})_{m \times m}$ and an orthogonal matrix $S=(s_{ij})_{m \times m}$ such that $R=SD$, namely
\begin{equation*}
    r_{ij}=\sum_{k=1}^m s_{ik}d_{kj}= \sum_{k=1}^m s_{ik} \int_\Omega x_ku_1 u_{j+1} dm = \int_\Omega \Big( \sum_{k=1}^m s_{ik}x_k\Big)u_1 u_{j+1} dm = 0,
\end{equation*}
for $1 \leq j < i \leq m$. By setting $y_i=\sum_{k=1}^m s_{ik}x_k$, we have
\begin{equation*}
    \int_\Omega y_iu_1 u_{j+1} dm = 0   \quad \mbox{for} \quad 1 \leq j < i \leq m.
\end{equation*}
Let us denote $a_i=\int_\Omega y_i |u_1|^2dm$ and consider the real-valued functions $w_i$ given by
\begin{equation*}
    w_i = (y_i - a_i)u_1,
\end{equation*}
so that
\begin{equation*}
w_i|_{\partial \Omega}=0 \quad \mbox{and} \quad  \int_\Omega w_i u_{j+1} dm =0, \quad \mbox{for any} \quad j= 1, \ldots, i-1.
\end{equation*}
Then, from Rayleigh-Ritz inequality, we have for $1\leq i \leq m$
\begin{align}\label{Rayleigh-Ritz}
    \lambda_{i+1}\|w_i\|_{L^2}^2 \leq -\int_\Omega w_i \mathscr{L}w_i dm.
\end{align}
From definition of $w_i$ and using \eqref{property1} we get
\begin{align}\label{Equation4.8}
    -\int_\Omega w_i \mathscr{L}w_i dm &= -\int_\Omega w_i\Big[(y_i-a_i)\mathscr{L}u_1+u_1\mathscr{L}y_i+2T(\nabla y_i, \nabla u_1)\Big]dm\nonumber\\
    &=\lambda_1\|w_i\|_{L^2}^2 - \int_\Omega w_i (u_1 \mathscr{L}y_i + 2T(\nabla y_i, \nabla u_1))dm.
\end{align}
From \eqref{Rayleigh-Ritz} and \eqref{Equation4.8} we obtain
\begin{align}\label{Equation4.9}
   (\lambda_{i+1}- \lambda_1)\|w_i\|_{L^2}^2 \leq - \int_\Omega w_i (u_1 \mathscr{L}y_i + 2T(\nabla y_i, \nabla u_1))dm.
\end{align}
Using the Cauchy-Schwarz inequality, we have
\begin{align*}
   &(\lambda_{i+1}-\lambda_1)\Big(-\int_\Omega w_i (u_1 \mathscr{L}y_i + 2T(\nabla y_i, \nabla u_1))dm\Big)^2\nonumber\\
   &\leq (\lambda_{i+1}-\lambda_1) \|w_i\|_{L^2}^2 \|u_1 \mathscr{L}y_i + 2T(\nabla y_i, \nabla u_1)\|_{L^2}^2.
\end{align*}
Hence, by the previous inequality and \eqref{Equation4.9} we infer
\begin{align}\label{Equation4.10}
   (\lambda_{i+1}-\lambda_1)\Big(-\int_\Omega w_i &(u_1 \mathscr{L}y_i + 2T(\nabla y_i, \nabla u_1))dm\Big)\leq \|u_1 \mathscr{L}y_i + 2T(\nabla y_i, \nabla u_1)\|_{L^2}^2.
\end{align}
Integration by parts formula \eqref{parts} give us
\begin{align*}
    - \int_\Omega w_i (u_1 \mathscr{L}y_i + 2T(\nabla y_i, \nabla u_1))dm =  \int_\Omega |u_1|^2T(\nabla y_i, \nabla y_i) dm,
\end{align*}
and substituting the previous equality into \eqref{Equation4.10} we obtain
\begin{align}\label{equationn4.25}
    (\lambda_{i+1} - \lambda_1)\int_\Omega |u_1|^2T(\nabla y_i, \nabla y_i) dm \leq  4\Big\|\frac{1}{2}u_1 \mathscr{L}y_i + T(\nabla y_i, \nabla u_1)\Big\|_{L^2}^2.
\end{align}
Summing over $i$ from $1$ to $m$ in \eqref{equationn4.25}, we have
\begin{align}\label{equationn4.27}
\sum_{i=1}^m&(\lambda_{i+1} - \lambda_1)\int_\Omega |u_1|^2T(\nabla y_i, \nabla y_i) dm \leq 4\sum_{i=1}^m\Big\| \frac{1}{2}u_1 \mathscr{L}y_i + T(\nabla y_i, \nabla u_1) \Big\|_{L^2}^2.
\end{align}
Hence, from definition of $y_i$ and using \eqref{equation3.2}-\eqref{equation3.5} and the fact that $S$ is an orthogonal matrix, we obtain
\begin{align}\label{Equation-4.14}
       \sum_{\ell=1}^m|\nabla y_\ell|^2=n, \quad \sum_{\ell=1}^m|\nabla y_\ell u_i|^2=u_i^2,
\end{align}
\begin{align}
      \sum_{\ell=1}^m|T(\nabla y_\ell, \nabla u_i)|^2= \sum_{\ell=1}^m | T(e_\ell, \nabla u_i)|^2 =|T(\nabla u_i)|^2,
\end{align}
\begin{align}\label{Equation-4.16}
    \sum_{\ell=1}^m (\mathscr{L}(y_\ell))^2= n^2|{\bf H}_T|^2+|\tr(\nabla T) - T(\nabla \eta)|^2.
\end{align}
Since there exist positive real numbers $\varepsilon$ and $\delta$ such that $\varepsilon \langle X, X\rangle \leq \langle T(X), X\rangle \leq \delta \langle X, X \rangle$, for any vector field $X$ on $\Omega$, we have
\begin{align}\label{Equation4.13}
    \sum_{i=1}^m(\lambda_{i+1} - \lambda_1)T(\nabla y_i, \nabla y_i) \geq \varepsilon \sum_{i=1}^m(\lambda_{i+1} - \lambda_1)|\nabla y_i|^2,
\end{align}
and
\begin{align*}
    \sum_{i=1}^m(\lambda_{i+1} - \lambda_1)|\nabla y_i|^2&\geq \sum_{i=1}^n(\lambda_{i+1} - \lambda_1)|\nabla y_i|^2+(\lambda_{n+1} - \lambda_1)\sum_{\gamma=n+1}^m|\nabla y_\gamma|^2\\
    &=\sum_{i=1}^n(\lambda_{i+1} - \lambda_1)|\nabla y_i|^2+(\lambda_{n+1}-\lambda_1)\sum_{i=1}^n(1-|\nabla y_i|^2)\\
    &\geq \sum_{i=1}^n(\lambda_{i+1} - \lambda_1)|\nabla y_i|^2+\sum_{i=1}^n(\lambda_{i+1}-\lambda_1)(1-|\nabla y_i|^2)\\
    &=\sum_{i=1}^n(\lambda_{i+1}-\lambda_1).
\end{align*}
So, from the previous inequality and \eqref{Equation4.13}
\begin{align}\label{Equation4.14}
    \sum_{i=1}^m(\lambda_{i+1} - \lambda_1)T(\nabla y_i, \nabla y_i) \geq \varepsilon \sum_{i=1}^n(\lambda_{i+1}-\lambda_1).
\end{align}
Analogous to the proof of Theorem~\ref{theorem1.1}, using identities~\eqref{Equation-4.14}-\eqref{Equation-4.16},  we obtain
\begin{equation}\label{Equation4.15}
   0 < \sum_{i=1}^m\Big\| \frac{1}{2}u_1 \mathscr{L}y_i + T(\nabla y_i, \nabla u_1) \Big\|_{L^2}^2 \leq \delta\Big[\Big(\sqrt{\lambda_1}+\frac{T_0}{2\sqrt{\delta}}\Big)^2 + \frac{n^2H_0+4C_0+2\delta T_0\eta_0}{4\delta}\Big].
\end{equation}
Thus, from \eqref{equationn4.27}, \eqref{Equation4.14} and \eqref{Equation4.15} we conclude that
\begin{align*}
    \sum_{i=1}^n(\lambda_{i+1}-\lambda_1)\leq\frac{4\delta}{\varepsilon}\Big[\Big(\sqrt{\lambda_1}+\frac{T_0}{2\sqrt{\delta}}\Big)^2 + \frac{n^2H_0+4C_0+2\delta T_0\eta_0}{4\delta}\Big],
\end{align*}
and so we complete the proof of Theorem~\ref{theorem1.2}.
\end{proof}
\subsection{Proof of Corollary~\ref{cor-yang-type-ineq}}
\begin{proof}
We begin by noting that \ref{T-parallel} is equivalent to
\begin{equation*}
    k\varsigma_{k+1}^2 -\Big(2 + \frac{4\delta}{n\varepsilon}\Big)\varsigma_{k+1} \sum_{i=1}^k\varsigma_i + \Big(1 + \frac{4\delta}{n\varepsilon}\Big)\sum_{i=1}^k\varsigma_i^2\leq 0,
\end{equation*}
which has discriminant non-negative. So, we can follows the steps in the proof of \cite[Theorem~3]{GomesMiranda} to solve it and to obtain the inequalities in Corollary~\ref{cor-yang-type-ineq}.
\end{proof}

\subsection{Proof of Corollary~\ref{cor-conjectura}}
\begin{proof}
The proof follows from the same steps as in the corresponding result of~\cite{GomesMiranda}. For the drifted Laplacian case, Inequality~\eqref{T-parallel} becomes
\begin{align}\label{T-parallel-2}
     \sum_{i=1}^k(\varsigma_{k+1}-\varsigma_i)^2 \leq \frac{4}{n}\sum_{i=1}^k(\varsigma_{k+1}-\varsigma_i)\varsigma_i.
\end{align}
where $\varsigma_i=\lambda_i+\frac{n^2H_0^2+4C_0}{4}$ with $C_0 = \sup_\Omega \{\frac{1}{2}\Delta \eta - \frac{1}{4}|\nabla \eta|^2\}$ and $H_0=\sup_\Omega |{\bf H}|$. From \eqref{T-parallel-2} and the recursion formula of Cheng and Yang~\cite{ChengYang07} we get
\begin{equation}\label{Ineq-4.22}
    \frac{F_{k+l}}{(k+l)^{\frac{4}{n}}} \leq \frac{F_k}{k^{\frac{4}{n}}},
\end{equation}
for any positive integer $l$, where $F_k:=(1+\frac{2}{n})\Big(\frac{1}{k}\sum_{i=1}^k\varsigma_i\Big)^2 - \frac{1}{k}\sum_{i=1}^k\varsigma_i^2$. 
Furthermore, we see that
\begin{equation}
    \frac{F_k}{k^{\frac{4}{n}}} \leq \frac{\Big(\frac{1}{k}\sum_{i=1}^k\varsigma_i\Big)^2}{k^{\frac{4}{n}}}\,\, \mbox{and}\,\, \frac{F_{k+l}}{(k+l)^{\frac{4}{n}}} = \Big(1+\frac{2}{n}\Big)\frac{\Big(\frac{1}{k+l}\sum_{i=1}^k\varsigma_i\Big)^2}{(k+l)^{\frac{4}{n}}} - \frac{\frac{1}{k+l}\sum_{i=1}^k\varsigma_i^2}{(k+l)^{\frac{4}{n}}}.
\end{equation}
Using the asymptotic Weyl's formula we obtain
\begin{align}\label{Ineq-4.24}
   \lim_{k\to \infty} \frac{\frac{1}{k}\sum_{i=1}^k\varsigma_i}{k^{\frac{2}{n}}} = \frac{n}{n+2} \frac{4 \pi^2}{(\omega_n \vol \Omega)^\frac{2}{n}} \,\, \mbox{and} \,\, \lim_{k\to \infty} \frac{\frac{1}{k}\sum_{i=1}^k\varsigma_i^2}{k^{\frac{4}{n}}} = \frac{n}{n+4} \frac{16 \pi^4}{(\omega_n \vol \Omega)^\frac{4}{n}}.
\end{align}
From Inequalities~\eqref{Ineq-4.22}-\eqref{Ineq-4.24}, following the last part of the proof of \cite[Theorem~2]{GomesMiranda}, we complete the proof.
\end{proof}

\begin{remark}\label{Impovement-CPolya}
 Corollary~\ref{cor-conjectura} is an improvement of Theorem~2 in \cite{GomesMiranda}, since $4C_0=\sup_\Omega\Big\{2\Delta \eta-|\nabla \eta|^2\Big\}\leq 2\Bar{\eta_0}+\eta_0^2$, where $\Bar{\eta_0}=\sup_\Omega |\Delta_\eta \eta|$ and $\eta_0 = \sup_\Omega |\nabla \eta|$.  Likewise, our Corollary~\ref{cor-yang-type-ineq} improves  Theorem~3 in~\cite{GomesMiranda}. 
\end{remark}

\section{Concluding remarks}\label{concludingremarks}
In this section, we obtain another universal estimate for Problem~\eqref{problem1} that generalizes a result by Xia and Xu~\cite{XiaXu} for the drifted Laplacian operator. In a more general setting, we have the following result.
\begin{theorem}\label{theorem-5.1}
Let $\Omega$ be a bounded domain in an $n$-dimensional complete Riemannian manifold $M^n$ isometrically immersed in $\mathbb{R}^m$, and $\lambda_i$ be the $i$-th eigenvalue of Problem~\eqref{problem1}. Then, we have
\begin{equation*}
\sum_{i=1}^k(\lambda_{k+1}-\lambda_i)^2 \leq \frac{4\delta}{n\varepsilon}\sum_{i=1}^k(\lambda_{k+1}-\lambda_i)\Big[\lambda_i + \Big(\frac{T_0}{\sqrt{\delta}}+ \eta_0\sqrt{\delta}\Big)\sqrt{\lambda_i} + \frac{n^2H_0^2+(T_0 + \delta\eta_0)^2}{4\delta} \Big]
\end{equation*}
where $T_0=\sup_\Omega|\tr(\nabla T)|$, $\eta_0=\sup_\Omega|\nabla \eta|$ and $H_0=\sup_\Omega |{\bf H}_T|$.
\end{theorem}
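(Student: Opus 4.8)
The plan is to rerun the proof of Theorem~\ref{theorem1.1} starting from Lemma~\ref{lemma2}, but to handle the terms carrying $T(\nabla\eta)$ by a plain Cauchy--Schwarz estimate rather than by the integration-by-parts identity~\eqref{Eq-3.7}; it is exactly that identity that forces the curvature-type constant $C_0$ to appear, so bypassing it removes $C_0$ at the cost of an extra first-order-in-$\sqrt{\lambda_i}$ term. Concretely, I would start from Inequality~\eqref{eq4.18}, i.e.\ the form of Lemma~\ref{lemma2} in which the terms $\tfrac14\int_\Omega u_i^2|T(\nabla\eta)|^2\,dm$ and $-\int_\Omega u_i T(T(\nabla\eta),\nabla u_i)\,dm$ still appear separately, so that the substitution $\dv_\eta(T^2(\nabla\eta))=\dv(T^2(\nabla\eta))-|T(\nabla\eta)|^2$ is never invoked.

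The estimates, in order: (i) $\|T(\nabla u_i)\|_{L^2}^2\le\delta\lambda_i$ as in~\eqref{equation5-4}; (ii) $\tfrac{n^2}{4}\int_\Omega u_i^2|{\bf H}_T|^2\,dm\le\tfrac{n^2H_0^2}{4}$ as in~\eqref{inequation-HT}; (iii) $\int_\Omega u_i T(\tr(\nabla T),\nabla u_i)\,dm\le T_0\|T(\nabla u_i)\|_{L^2}$ exactly as in~\eqref{eqq31}; (iv) $\tfrac14\int_\Omega u_i^2|T(\nabla\eta)|^2\,dm\le\tfrac14\delta^2\eta_0^2$ by~\eqref{T-norm}; (v) using that $T$ is symmetric to rewrite $T(T(\nabla\eta),\nabla u_i)=\langle T(\nabla\eta),T(\nabla u_i)\rangle$, then Cauchy--Schwarz together with~\eqref{T-norm} yields $-\int_\Omega u_i T(T(\nabla\eta),\nabla u_i)\,dm\le\delta\eta_0\|T(\nabla u_i)\|_{L^2}$; (vi) the trace-derivative term $\tfrac14\int_\Omega u_i^2\langle\tr(\nabla T),\tr(\nabla T)-2T(\nabla\eta)\rangle\,dm\le\tfrac14 T_0^2+\tfrac12\delta T_0\eta_0$ exactly as in~\eqref{Equation-4.1}. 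Combining these with $n\varepsilon\le\int_\Omega\tr(T)u_i^2\,dm$, the bracket in~\eqref{eq4.18} collapses to $\|T(\nabla u_i)\|_{L^2}^2+(T_0+\delta\eta_0)\|T(\nabla u_i)\|_{L^2}+\tfrac14\bigl(n^2H_0^2+(T_0+\delta\eta_0)^2\bigr)$, using the algebraic identity $\tfrac14 T_0^2+\tfrac12\delta T_0\eta_0+\tfrac14\delta^2\eta_0^2=\tfrac14(T_0+\delta\eta_0)^2$ and the fact that the first-order contributions from (iii) and (v) add with the common coefficient $T_0+\delta\eta_0$.

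To finish, substitute $\|T(\nabla u_i)\|_{L^2}\le\sqrt{\delta}\,\sqrt{\lambda_i}$ from (i), divide by $n\varepsilon$, and factor $\delta$ out of the bracket: the coefficient of $\sqrt{\lambda_i}$ becomes $(T_0+\delta\eta_0)/\sqrt{\delta}=T_0/\sqrt{\delta}+\eta_0\sqrt{\delta}$ and the constant becomes $\bigl(n^2H_0^2+(T_0+\delta\eta_0)^2\bigr)/(4\delta)$, which is precisely the asserted inequality. I do not expect a genuine obstacle: the argument is a bookkeeping variant of Theorem~\ref{theorem1.1}. The only point demanding care is to resist absorbing $\tfrac14 T_0^2$ into a square $(\|T(\nabla u_i)\|_{L^2}+\tfrac12 T_0)^2$ as was done there; here one keeps $\|T(\nabla u_i)\|_{L^2}$ linear so that the two linear contributions from (iii) and (v) merge with the correct coefficient $T_0+\delta\eta_0$, and one must verify that the three constant pieces in (iv) and (vi) reassemble into the single perfect square $(T_0+\delta\eta_0)^2$.
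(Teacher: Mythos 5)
Your proposal is correct and is essentially the paper's own argument: the paper likewise keeps all the Theorem~\ref{theorem1.1} estimates and replaces the bound by $C_0$ with a Cauchy--Schwarz bound $\delta\eta_0\|T(\nabla u_i)\|_{L^2}+\tfrac{1}{4}\delta^2\eta_0^2\le\delta^{3/2}\eta_0\sqrt{\lambda_i}+\tfrac{1}{4}\delta^2\eta_0^2$ on the drift term, the only cosmetic difference being that the paper starts from Lemma~\ref{lemma2} and undoes the integration by parts via \eqref{Eq-3.7}, while you work directly from \eqref{eq4.18} before that substitution --- the resulting computation and constants are identical.
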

\begin{proof}
The proof is a slight modification from the proof of Theorem~\ref{theorem1.1}. For this is enough to notice that
\begin{align*}
\int_\Omega u_i^2\Big(\frac{1}{2}\dv(T^2(\nabla \eta))-\frac{1}{4}|T(\nabla \eta)|^2\Big)dm =& \int_\Omega u_i^2\Big(\frac{1}{2}\dv_\eta (T^2(\nabla \eta)) +\frac{1}{4}|T(\nabla \eta)|^2 \Big)dm\\
=& - \int_\Omega u_i\langle T(\nabla \eta), T(\nabla u_i)\rangle dm\\
&+ \frac{1}{4}\int_\Omega u_i^2|T(\nabla \eta)|^2dm \\
\leq& \Big(\int_\Omega |T(\nabla \eta)|^2|T(\nabla u_i)|^2dm\Big)^{\frac{1}{2}} + \frac{\delta^2\eta_0^2}{4}\\
\leq& \delta \eta_0 \|T(\nabla u_i)\| + \frac{\delta^2\eta_0^2}{4} \leq \delta^{\frac{3}{2}} \eta_0 \sqrt{\lambda}  + \frac{\delta^2\eta_0^2}{4}
\end{align*}
where in the above calculation we have used \eqref{T-norm}, \eqref{Eq-3.7} and \eqref{equation5-4}.
\end{proof}
The next result is immediate from Theorem~\ref{theorem-5.1}.
\begin{cor}\label{cor-5.1}
Let $\Omega$ be a bounded domain in an $n$-dimensional complete Riemannian manifold $M^n$ isometrically immersed in $\mathbb{R}^m$, and $\lambda_i$ be the $i$-th eigenvalue of Problem~\ref{problem1} for the drifted Cheng-Yau operator. Then, we have
\begin{equation*}
    \sum_{i=1}^k(\lambda_{k+1}-\lambda_i)^2 \leq \frac{4\delta}{n\varepsilon}\sum_{i=1}^k(\lambda_{k+1}-\lambda_i)\left(\lambda_i + \eta_0\sqrt{\delta}\sqrt{\lambda_i} + \frac{n^2H_0^2+\delta^2\eta_0^2}{4\delta}\right),
\end{equation*}
where $\eta_0=\sup_\Omega|\nabla \eta|$ and $H_0=\sup_\Omega |{\bf H}_T|$.
\end{cor}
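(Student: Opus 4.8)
The plan is to obtain Corollary~\ref{cor-5.1} as an immediate specialization of Theorem~\ref{theorem-5.1} to the drifted Cheng-Yau operator, exactly as Corollary~\ref{cor-Tparalell} was obtained from Theorem~\ref{theorem1.1}. The drifted Cheng-Yau operator corresponds to the case in which $T$ is symmetric and divergence-free, so $\dv T = 0$. As recorded in Section~\ref{preliminaries}, for a symmetric divergence-free tensor one has $\tr(\nabla T) = \sum_{i=1}^n (\nabla_{e_i}T)(e_i) = 0$, and hence $T_0 = \sup_\Omega |\tr(\nabla T)| = 0$.

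First I would substitute $T_0 = 0$ into the bracketed expression on the right-hand side of the inequality in Theorem~\ref{theorem-5.1}. The coefficient of $\sqrt{\lambda_i}$ becomes $\frac{T_0}{\sqrt{\delta}} + \eta_0\sqrt{\delta} = \eta_0\sqrt{\delta}$, and the constant term $\frac{n^2 H_0^2 + (T_0 + \delta\eta_0)^2}{4\delta}$ collapses to $\frac{n^2 H_0^2 + \delta^2\eta_0^2}{4\delta}$. This yields precisely
\begin{equation*}
    \sum_{i=1}^k(\lambda_{k+1}-\lambda_i)^2 \leq \frac{4\delta}{n\varepsilon}\sum_{i=1}^k(\lambda_{k+1}-\lambda_i)\left(\lambda_i + \eta_0\sqrt{\delta}\sqrt{\lambda_i} + \frac{n^2H_0^2+\delta^2\eta_0^2}{4\delta}\right),
\end{equation*}
which is the claimed estimate, with $\eta_0 = \sup_\Omega|\nabla \eta|$ and $H_0 = \sup_\Omega|{\bf H}_T|$ as before.

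There is essentially no obstacle here: the entire content is the observation that divergence-free symmetric tensors force $T_0 = 0$, which is already established in the Preliminaries, together with a direct algebraic simplification. The only point worth stating carefully is that the operator $\mathscr{L}$ in Problem~\eqref{problem1} indeed reduces to the drifted Cheng-Yau operator $\square f - \langle \nabla\eta, T(\nabla f)\rangle$ of \eqref{drifted-CY-operator} precisely under the hypothesis $\dv T = 0$, so that the hypotheses of Theorem~\ref{theorem-5.1} apply verbatim. Thus the proof is a single sentence invoking Theorem~\ref{theorem-5.1} with $T_0 = 0$.
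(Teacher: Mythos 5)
Your proposal is correct and matches the paper's own treatment: the paper states the corollary is immediate from Theorem~\ref{theorem-5.1}, precisely because the drifted Cheng-Yau operator corresponds to $\dv T=0$, which forces $T_0=\sup_\Omega|\tr(\nabla T)|=0$ as noted in the preliminaries, and substituting $T_0=0$ gives the stated bound.
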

\begin{remark}
We highlight that Theorem~\ref{theorem-5.1} generalize \cite[Theorem~1.2, (i)]{XiaXu}. This fact is more evident by taking $T=I$ in Corollary~\ref{cor-5.1} so that $\varepsilon = \delta = 1$, and then the previous inequality becomes the inequality in \cite[Theorem~1.2, (i)]{XiaXu}.
\end{remark}

\section*{Acknowledgements}
The first author has been partially supported by Coordenação de Aperfeiçoamento de Pessoal de Nível Superior (CAPES) in conjunction with Fundação Rondônia de Amparo ao Desenvolvimento das Ações Científicas e Tecnológicas e à Pesquisa do Estado de Rondônia (FAPERO). The second author has been partially supported by Conselho Nacional de Desenvolvimento Científico e Tecnológico (CNPq), of the Ministry of Science, Technology and Innovation of Brazil. Moreover, would like to express our sincere thanks to the anonymous referee for his/her careful reading and useful comments which helped us improve our paper.

\end{document}